 \theoremstyle{plain}
\newtheorem{thm}{Theorem}[section]
\theoremstyle{plain}
  \newtheorem{prop}[thm]{Proposition}
\theoremstyle{plain}
 \newtheorem{lemma}[thm]{Lemma}
\theoremstyle{plain}
\theoremstyle{plain}
\newtheorem{cor}[thm]{Corollary}
\theoremstyle{definition}
  \newtheorem{defn}[thm]{Definition}
\theoremstyle{definition}
 \theoremstyle{definition}
\theoremstyle{remark}
\newtheorem{rmk}[thm]{Remark}
\numberwithin{equation}{section}
\newcommand{\Z}{\mathbb{Z}}
\newcommand{\Q}{\mathbb{Q}}
\newcommand{\C}{\mathbb{C}}
\newcommand{\F}{\mathbb{F}}
\newcommand{\N}{\mathbb{N}}
\newcommand{\HT}{\mathrm{HT}}
\newcommand{\A}{\mathbb{A}}
\newcommand{\fE}{\mathfrak{e}}
\newcommand{\fF}{\mathfrak{f}}
\newcommand{\fM}{\mathfrak{M}}
\newcommand{\fm}{\mathfrak{m}}
\newcommand{\bF}{\mathbb{F}}
\newcommand{\bk}{\mathbf{k}}
\newcommand{\bZ}{\mathbb{Z}}
\newcommand{\nr}{\mathrm{nr}}
\newcommand{\cE}{\mathcal{E}}
\newcommand{\cM}{\mathcal{M}}
\newcommand{\cO}{\mathcal{O}}
\newcommand{\Id}{\mathrm{id}}
\newcommand{\Gal}{\mathrm{Gal}}
\newcommand{\Hom}{\mathrm{Hom}}
\newcommand{\Res}{\mathrm{Res}}
\newcommand{\Ind}{\mathrm{Ind}}
\newcommand{\SL}{\mathrm{SL}}
\newcommand{\GL}{\mathrm{GL}}
\newcommand{\PGL}{\mathrm{PGL}}
\newcommand{\Spec}{\mathrm{Spec}\ }
\newcommand{\Frob}{\mathrm{Frob}}
\newcommand{\id}{\mathrm{id}}
\newcommand{\isom}{\cong}
\newcommand{\tld}[1]{\widetilde{#1}}
\newcommand{\JH}{\mathrm{JH}}
\newcommand{\rbar}{\overline{r}}
\newcommand{\rhobar}{{\overline{\rho}}}
\newcommand{\sigmabar}{\overline{\sigma}}
\newcommand{\Spf}{\mathrm{Spf}}
\newcommand{\rad}{\mathrm{rad}}
\newcommand{\cosoc}{\mathrm{cosoc}}
\newcommand{\defeq}{\stackrel{\textrm{\tiny{def}}}{=}}
\newcommand{\ovl}[1]{\overline{#1}}
\newif\iffinalrun
  \newcommand{\mar}[1]{}
  \newcommand{\mar}[1]{\marginpar{\raggedright\tiny #1}}
\DeclareMathOperator{\et}{et}
\DeclareMathOperator{\Fil}{Fil}
\DeclareMathOperator{\Mat}{Mat}
\DeclareMathOperator{\gr}{gr}
\newcommand{\ra}{\rightarrow}
\newcommand{\into}{\hookrightarrow}
\newcommand{\surj}{\twoheadrightarrow}
\title{Multiplicity one for wildly ramified representations}
\author{Daniel Le}
\begin{document}

\maketitle

\begin{abstract}
Let $F$ be a totally real field in which $p$ is unramified.
Let $\rbar: G_F \ra \GL_2(\overline{\F}_p)$ be a modular Galois representation which satisfies the Taylor--Wiles hypotheses and is generic at a place $v$ above $p$.
Let $\fm$ be the corresponding Hecke eigensystem.
We show that the $\fm$-torsion in the mod $p$ cohomology of Shimura curves with full congruence level at $v$ coincides with the $\GL_2(k_v)$-representation $D_0(\rbar|_{G_{F_v}})$ constructed by Breuil and Pa\v{s}k\={u}nas.
In particular, it depends only on the local representation $\rbar|_{G_{F_v}}$, and its Jordan--H\"older factors appear with multiplicity one.
This builds on and extends work of the author with Morra and Schraen and, independently, Hu--Wang, which proved these results when $\rbar|_{G_{F_v}}$ was additionally assumed to be tamely ramified.
The main new tool is a method for computing Taylor--Wiles patched modules of integral projective envelopes using multitype tamely potentially Barsotti--Tate deformation rings and their intersection theory.
\end{abstract}

\section{Introduction}

Let $F/\Q$ be a totally real field which is unramified at a rational prime $p$.
Let $\F$ be a finite extension of $\F_p$.
Suppose that $\rbar: G_F \ra \GL_2(\F)$ is a Galois representation occuring in the $\F$-cohomology of a Shimura curve $X_{/F}$ with corresponding Hecke eigensystem $\fm$ (see \S \ref{sec:main}).
Suppose that the corresponding quaternion algebra splits at $p$.
Let $v$ be a place of $F$ dividing $p$, let $K^v$ be a compact open subgroup of $(D\otimes_F \A_F^{\infty,v})^\times$ and $K_v(n)$ the $n$-th principal congruence subgroup at $v$.
One expects that the analogues of the mod $p$ local Langlands correspondence for $\GL_2(\Q_p)$ and mod $p$ local-global compatibility for $\GL_2(\Q)$ describe the $\GL_2(F_v)$-representation
\[\pi' = \Hom_{G_F}(\rbar,\varinjlim_n H^1(X(K^vK_v(n)),\F)[\fm_{\rbar}])\]
in the completed cohomology of $X$, at least up to multiplicities, in terms of $\rhobar \defeq \rbar|_{G_{F_v}}$.
In fact, we study a related representation $\pi = (M^{\mathrm{min}})^*$ (see \S \ref{sec:main}), which is minimal with respect to multiplicities.
Such analogues are unknown at present, although \cite{breuil,EGS} show that if $\rbar$ satisfies the usual Taylor--Wiles hypotheses and $\rhobar$ is generic, then $\pi$ contains one of infinitely many $\GL_2(F_v)$-representations constructed by \cite{BP}.
The idea, as explained in \cite{breuil}, behind the constructions of \cite{BP} is that if one can show that the restriction of $\pi$ to the maximal compact subgroup $\GL_2(\cO_{F_v})$ satisfies certain multiplicity one properties, then $\pi$ must contain a Diamond diagram of the form $D(\rhobar,\iota)$.
These multiplicity one properties, which one might view as minimalist conjectures for multiplicities, were established in \cite{EGS}.

That the family of representations containing a diagram $D(\rhobar,\iota)$ is infinite is unfortunate and warrants further investigation of $\pi$.
One part of a Diamond diagram $D(\rhobar,\iota)$ is a $\GL_2(k_v)$-representation denoted $D_0(\rhobar)$, which is a subrepresentation of  $\pi|_{\GL_2(\cO_{F_v})}$ (see \cite[Proposition 9.3]{breuil}), and thus a subrepresentation of the invariants of $\pi$ under the first principal congruence subgroup $K_v(1)$ of $\GL_2(\cO_{F_v})$.
Our main result is the following.

\begin{thm}[Corollary \ref{cor:main}]\label{intro:mainthm}
If $\rbar$ satisfies the Taylor--Wiles hypotheses and $\rhobar$ is generic $($see Definition \ref{def:gen}$)$, then the $\GL_2(k_v)$-representation $\pi^{K_v(1)}$ is isomorphic to $D_0(\rhobar)$.
In particular, it only depends on $\rhobar$ and is multiplicity free.
\end{thm}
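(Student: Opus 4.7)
The starting point is the existing result from \cite{breuil,EGS,BP} that $D_0(\rhobar) \hookrightarrow \pi^{K_v(1)}$, together with the weight-part-of-Serre results, which ensure that every Jordan--Hölder factor of $\pi^{K_v(1)}$ lies in the predicted Serre weight set $W(\rhobar)$. Given these inputs, proving the theorem reduces to establishing an upper bound: each $\sigma \in W(\rhobar)$ appears with multiplicity at most one as a Jordan--Hölder factor of $\pi^{K_v(1)}$.

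The plan is to attack this via Taylor--Wiles patching. I would patch the mod $p$ cohomology of the Shimura curves at full level $K_v(1)$ to obtain a module $M_\infty$ over the patched local framed deformation ring $R_\infty$, equivariant for a commuting action of the finite group $\GL_2(\cO_{F_v})/K_v(1) \cong \GL_2(k_v)$. Its reduction modulo the patching ideal $\fm_\infty$ is Pontryagin dual to $\pi^{K_v(1)}$. Hence for each Serre weight $\sigma$, the multiplicity of $\sigma$ in $\pi^{K_v(1)}$ equals the $\F$-dimension of $M_\infty(P_\sigma)/\fm_\infty$, where $P_\sigma$ denotes a projective $\cO[\GL_2(k_v)]$-envelope of $\sigma$.

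The central computation is that of $M_\infty(P_\sigma)$, which I would approach by resolving $P_\sigma$ in terms of tame inertial types. For each tame type $\tau$, the corresponding Deligne--Lusztig representation $\sigma(\tau)$ produces a patched module $M_\infty(\sigma(\tau))$ that is maximal Cohen--Macaulay, and cyclic by the main result of \cite{EGS}, over the tamely potentially Barsotti--Tate deformation ring $R_\infty^{\tau}$ sitting inside $\Spec R_\infty$. Choosing a suitable family of types $\tau$ whose mod $p$ Deligne--Lusztig reductions assemble into $P_\sigma$, one can attempt to express $M_\infty(P_\sigma)$ as a gluing of the modules $M_\infty(\sigma(\tau))$ along their intersections. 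The multiplicity-one assertion for $\sigma$ then translates into the statement that this glued module, after reduction modulo $\fm_\infty$, is one-dimensional.

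The main obstacle is precisely the gluing step. In the tame case treated in previous work, a single type $\tau$ already yields $\sigma(\tau)$ covering $P_\sigma$ in a sufficiently clean way, so no gluing is required. When $\rhobar$ is wildly ramified, $P_\sigma$ is strictly larger than any one $\sigma(\tau)$, and several types contribute modules supported on distinct components of $\Spec R_\infty$ whose mutual intersections encode the internal structure of $P_\sigma$. The key new technical input is therefore a precise description of these intersections — the multitype intersection theory of tamely potentially Barsotti--Tate deformation rings announced in the abstract — whose cleanliness relies on the genericity hypothesis on $\rhobar$. Once this intersection picture is in place, a Cohen--Macaulay rank computation on the relevant components of $\Spec R_\infty$ should yield $\dim_{\F} M_\infty(P_\sigma)/\fm_\infty = 1$ for each $\sigma \in W(\rhobar)$, which combined with the lower bound $D_0(\rhobar) \hookrightarrow \pi^{K_v(1)}$ forces the isomorphism $\pi^{K_v(1)} \cong D_0(\rhobar)$.
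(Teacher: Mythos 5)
Your overall strategy matches the paper's: patch to obtain a fixed-determinant patching functor $M_\infty$, reduce the theorem to the cyclicity of $M_\infty(P_\sigma)$ over $R_\infty$ for each $\sigma \in W(\rhobar)$, compute this by gluing the patched modules of lattices in Deligne--Lusztig representations using the intersection theory of multitype Barsotti--Tate deformation rings, and close the argument via the inclusion $D_0(\rhobar) \into \pi^{K_v(1)}$ together with the Breuil--Pa\v{s}k\={u}nas maximality theorem.

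One step of your global reduction is incorrect as stated, though. You claim the weight part of Serre shows that \emph{every} Jordan--H\"older factor of $\pi^{K_v(1)}$ lies in $W(\rhobar)$, so that the theorem reduces to an upper multiplicity bound for the weights in $W(\rhobar)$. This cannot be right: $D_0(\rhobar)$ itself always has Jordan--H\"older constituents outside $W(\rhobar)$, so it could not inject into a representation whose constituents all lie in $W(\rhobar)$. What the weight part of Serre actually gives is that the $\GL_2(k_v)$-\emph{socle} of $\pi^{K_v(1)}$ is supported on $W(\rhobar)$; this forces $\pi^{K_v(1)}$ to embed in $\bigoplus_{\sigma\in W(\rhobar)} \Inj\,\sigma$, and then the multiplicity-one statement $\dim_\F \Hom_{\GL_2(k_v)}(P_\sigma, \pi^{K_v(1)}) = 1$ for each $\sigma \in W(\rhobar)$ together with the Breuil--Pa\v{s}k\={u}nas characterization of $D_0(\rhobar)$ as the \emph{maximal} such subrepresentation yields $\pi^{K_v(1)}\subseteq D_0(\rhobar)$, hence equality by finite length. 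Your plan is sound once this input is stated correctly and the role of the maximality characterization in the final deduction is made explicit.
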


\noindent One can view this result as showing that $\pi$ satisfies a minimality property: $\pi^{K_v(1)}$ is as small as possible.
A similar result has been announced by Hu--Wang.

The main tool in the proof of Theorem \ref{intro:mainthm} is the Taylor--Wiles patching method.
Diamond and Fujiwara \cite{D,F} discovered that the Cohen--Macaulay property of patched modules could be combined with local algebra results of Auslander, Buchsbaum, and Serre to rederive and generalize mod $p$ multiplicity one results of Mazur for modular forms with level away from $p$.
\cite{EGS} proved similar results for modular forms with level at $p$ by introducing two gluing methods to calculate patched modules from smaller ones to which the Diamond--Fujiwara trick applied.
The first method is a version of Nakayama's lemma and uses the submodule structure of mod $p$ reductions of Deligne--Lusztig representations.
The second method combines the submodule structure above with the intersection theory of special fibers of tamely potentially Barsotti--Tate deformation rings.

When $\rhobar$ is tamely ramified, \cite{HW,LMS} show that the patched modules of projective envelopes of irreducible $\F[\GL_2(k_v)]$-modules are cyclic modules by describing the submodule structure of these projective envelopes and using the Nakayama method of \cite{EGS} (cf.~ Proposition \ref{prop:oldcyc}).
However, the gluing methods of \cite{EGS} are insufficient when $\rhobar$ is wildly ramified.
Indeed, these methods only glue together characteristic $p$ patched modules, but when $\rhobar$ is wildly ramified there is more than one isomorphism class of $\F[\GL_2(k_v)]$-modules satisfying the multiplicity one properties for $\pi^{K_v(1)}$ established in \cite{EGS}.

We introduce a variant of the intersection theory method of \cite{EGS}, which uses the intersection theory of integral tamely potentially Barsotti--Tate deformation rings.
Let $W(\F)$ denote the Witt vectors of $\F$.
The first step (Proposition \ref{prop:oldcyc}) is to show that the methods of \cite{EGS} still apply to certain quotients of generic $W(\F)[\GL_2(k_v)]$-projective envelopes (which are projective envelopes in the abelian category of $W(\F)[\GL_2(k_v)]$-modules generated by lattices in some fixed set of Deligne--Lusztig representations).
If such a quotient is reducible rationally, then it can be written as a submodule of the direct sum of two smaller quotients with $p$-torsion cokernel (see Proposition \ref{prop:exseq}).
This reflects a kind of transversality: while these subcategories do not give a direct product decomposition of the category of $W(\F)[\GL_2(k_v)]$-modules, if two subquotients of lattices in two distinct Deligne--Lusztig representations are isomorphic, they must be $p$-torsion.
By exactness of patching and this exact sequence, it turns out that the patched modules of $W(\F)[\GL_2(k_v)]$-projective envelopes are then determined by the patched modules of these quotients (this depends crucially on the fact that all such patched modules turn out to be cyclic).

It remains to actually compute these patched modules using intersection theory in a multitype Barsotti--Tate framed deformation space, which we define to be the Zariski closure in the unrestricted framed deformation space of $\rhobar$ of potentially Barsotti--Tate Galois representations with tame inertial type in some fixed set.
That the resulting patched module is cyclic comes from the fact that the multitype Barsotti--Tate deformation rings exhibit a similar kind of transversality: two lattices in potentially Barsotti--Tate Galois representations of two distinct generic tame inertial types can be congruent modulo $p$, but never modulo $p^2$.

We now give a brief overview of the following sections.
In \S 2, we generalize some of the results of \cite{LMS} and prove the key result (Proposition \ref{prop:exseq}) gluing integral projective envelopes from their quotients.
In \S 3, we define and calculate multitype Barsotti--Tate deformation rings---this is the other key technical input.
To compare Kisin modules for varying tame types, it is much more convenient to choose eigenbases for Kisin modules which are not always gauge bases in the sense of \cite[\S 7.3]{EGS}.
This requires generalizing \cite[Theorem 4.1]{LLLM}.
The main result, Theorem \ref{thm:defring}, of this section computes some multitype Barsotti-Tate framed deformation spaces.
In \S 4, we calculate the abstract patched modules of projective envelopes using the Nakayama method and our integral intersection theory method.
In \S 5, we apply the results of \S 4 to the cohomology of Shimura curves using the Taylor--Wiles method.

\subsection{Acknowledgments}

Lemma \ref{lemma:ca} originally appeared in \cite{LLM}, and we thank Bao Le Hung and Stefano Morra for allowing us to reproduce it here.
The idea to use multitype Barsotti--Tate deformation rings grew out of the joint work (\cite{LLM}).
We thank Bao Le Hung and Stefano Morra for their collaboration and other useful discussions on Kisin modules and \'etale $\varphi$-modules.
We thank the anonymous referee for many useful suggestions, comments, and corrections.
The debt owed to the work of Christophe Breuil, Matthew Emerton, Toby Gee, Vytautas Pa\v{s}k\={u}nas, and David Savitt will be clear to the reader.

The author was supported by the Simons Foundation under an AMS-Simons travel grant, by the National Science Foundation under the Mathematical Sciences Postdoctoral Research Fellowship No.~ 1703182, and by the Centre International de Rencontres Math\'ematiques under the Research in Pairs program No.~ 1877.
We thank CIRM for providing hospitality and excellent working conditions while part of this work was carried out.

\subsection{Notation}\label{subsec:not}

If $F$ is any field, we write $\overline{F}$ for a separable closure of $F$ and $G_F:= \mathrm{Gal}(\overline{F}/F)$ for the absolute Galois group of $F$.

Let $f\in \N$ and $q = p^f$.
Let $\cO_K$ be the Witt vectors $W(\F_q)$ of $\F_q$.
Let $K = \cO_K[p^{-1}]$ be the unramified extension of $\Q_p$ of degree $f$.
Let $E$ be an extension of $K$ with ring of integers $\cO$, uniformizer $\varpi$, and residue field $\F$.
This induces embeddings $\cO_K\into \cO$ and $\iota_0: \F_q \into \F$.
For $i\in \Z/f$, let $\iota_i = \iota_0\circ \varphi^i$ be the $i$-th Frobenius twist of $\iota_0$.
We fix an embedding $\F \into \overline{\F}_q$.
We will denote by $(\cdot)^*$ the $\F$-linear dual, and by $(\cdot)^\vee$ the contragredient of a representation.

Let $G$ (resp.~$G^{\mathrm{der}}$) be the algebraic group $\Res_{\F_q/\F_p} \GL_2$ (resp.~$\Res_{\F_q/\F_p} \SL_2$), and let $T\subset G$ (resp.~$T^{\mathrm{der}} \subset G^{\mathrm{der}}$) be the diagonal torus.
Let $X^*(T)$ (resp.~$X^*(T^{\mathrm{der}})$) denote the group of characters of $T$ (resp.~ $T^{\mathrm{der}}$).
Let $X_*(T)$ and $X_*(T^{\mathrm{der}})$ similarly denote groups of cocharacters.
By the embeddings $\iota_i$, $X^*(T)$ is identified with $X^*(T \times_{\F_p} \F) \cong X^*(\prod_{i\in \Z/f} \mathbb{G}_m^2)$, which is identified with $(\Z^2)^{\Z/f}$ in the usual way.
A similar identification for $X_*(T)$ is made.
For a character $\mu\in X^*(T)$, we write $\mu_i$ as the $i$-th factor of $\mu$ so that $\mu = \sum_{i\in \Z/f} \mu_i$.

Let $\eta^{(i)} \in X^*(T)$ (resp.~ $\alpha^{(i)} \in X_*(T)$) be the dominant fundamental character (resp.~ the positive coroot) represented by $(1,0)$ (resp.~ $(1,-1)$) in the $i$-th factor and $0$ elsewhere.
Let $\eta = \sum_{i\in \Z/f} \eta^{(i)}$.
Let $\omega^{(i)}$ be the restriction of $\eta^{(i)}$ to $T^{\mathrm{der}}$.

Let $W$ be the Weyl group of $G$ and $G^{\mathrm{der}}$, which is similarly identified with $S_2^{\Z/f}$.
Here, $S_2$ denotes the permutation group on two elements.
We denote the trivial element of $S_2$ by $\Id$.
Then $W$ acts naturally on $X^*(T)$ and $X^*(T^{\mathrm{der}})$.
Let $\pi$ be the automorphism of $X^*(T)$ and $W$ which acts by a shift so that $\pi(x)_i = x_{i-1}$.
Then the action on $X^*(T)$ induced by the relative Frobenius morphism on $T$ is given by $p\pi^{-1}$, while the action of the relative Frobenius on $W$ is given by $\pi$.

For a dominant character $\mu\in X^*(T)$ we write $V(\mu)$ for the Weyl module for $G$ defined in \cite[II.2.13(1)]{JantzenBook}. It has a unique simple $G$-quotient $L(\mu)$. 
If $\mu = \sum_i \mu_i$ is $p$-restricted (i.e. $0\leq \langle \mu,\alpha^{(i)}\rangle \leq p$ for all $i$), then $L(\mu) = \otimes_i L(\mu_i)$ by the Steinberg tensor product theorem as in \cite[Theorem 3.9]{Herzig}.
Let $F(\mu)$ be the restriction of $L(\mu)$ to $\GL_2(\F_q)$, which remains irreducible by \cite[A.1.3]{Herzig}.
Every irreducible $\GL_2(\F_q)$-representation is of this form, and we call such a representation a {\it Serre weight}.
Note that $F(\mu) \cong F(\lambda)$ if and only if $\mu \cong \lambda \mod{(p-\pi)X^0(T)}$, where $X^0(T)$ is the kernel of the restriction map $X^*(T)\ra X^*(T^{\mathrm{der}})$.

Recall that to a pair $(s,\lambda)\in W\times X^*(T)$, \cite[Lemma 4.2]{Herzig} attaches a (virtual) representation of $\GL_2(\F_q)$, which we denote $R_s(\lambda)$.
In each use below, $R_s(\lambda)$ will in fact denote a true representation.

An \emph{inertial type} for a local field $L$ is a continuous $E$-representation $\tau$ of the inertial subgroup $I_L$, whose action factors through a finite quotient and can be extended to $G_L$.
For our purposes, all inertial types will be two-dimensional.
In this case, Henniart's \cite[Annexe A]{BM} attaches to $\tau$ a smooth irreducible finite-dimensional $\GL_2(\cO_L)$-representation $\sigma(\tau)$ over $E$ (see also \cite[\S 1.9]{EGS}).
We call the association of $\tau$ and $\sigma(\tau)$ the inertial local Langlands correspondence.
An inertial type $\tau$ is called \emph{tame} if $\tau$ factors through the tame quotient of $I_L$.
The tame inertial types are exactly those $\tau$ such that $\sigma(\tau)$ factors through $\GL_2(k_L)$ where $k_L$ is the residue field of $L$.

For any characteristic $0$ field $F$, let $\varepsilon: G_F \ra \Z_p^\times \subset \cO^\times$ denote the $p$-adic cyclotomic character and $\ovl{\varepsilon}$ denote its reduction modulo $\varpi$.
We now let $F$ be $K$.
Let $\C_p(i)$ denote $\varepsilon^i \otimes_E \C_p$, where the tensor product is over any embedding $E \into \C_p$.
Let $\rho: G_K \ra \GL(V)$ be a continuous representation over $E$.
For each embedding $\kappa: E \into \C_p$, let $\HT_\kappa(V)$ be the multiset of integers such that $-i$ appears with multiplicity $\dim_{\C_p} (V\otimes_\kappa \C_p(i))^{G_K}$.
Then in particular $\HT_\kappa(\varepsilon) = \{ 1 \}$ for all embeddings $\kappa$.
We say that a two-dimensional representation $V$ is (\emph{potentially}) \emph{Barsotti--Tate} if $V$ is (potentially) crystalline with $\HT_\kappa(V) = \{0,1\}$ for all embeddings $\kappa$.
If $\tau$ is an inertial type, we say that $V$ is potentially Barsotti--Tate of type $\tau$ if the action of $I_K$ on the potentially crystalline Dieudonn\'e module of $V$ is isomorphic to $\tau$.

\section{Quotients of generic $\GL_2(\F_q)$-projective envelopes} \label{sec:proj}

Suppose that $\mu \in X^*(T)$ and that $1\leq \langle \mu-\eta,\alpha^{(i)}\rangle < p-2$ for all $i\in \Z/f$.
Let $\sigma$ be $F(\mu-\eta)$.
Let $\tld{R}_\mu$ (resp.~ $R_\mu$) be the projective $\cO_K[\GL_2(\F_q)]$-envelope (resp.~ the projective $\F_q[\GL_2(\F_q)]$-envelope) of $\sigma$.
Let $S$ be the set $\{\pm\omega^{(i)}\}_i$ and let $I$ be a subset of $S$.
Recall from \cite[Definition 3.5]{LMS} that (with respect to $\mu$) we attach to a subset $J \subset S$ a Serre weight $\sigma_J$.
Let $R_{\mu,I}$ be the universal object among quotients of $R_\mu$ that do not contain $\sigma_{\{\omega\} }$ as a Jordan--H\"older factor for all $\omega$ in $I$.
Recall from \cite[\S 3]{LMS} that there is a filtration $\Fil^\bk$ on $R_\mu$ which induces a filtration $\Fil^\bk$ on $R_{\mu,I}$.
Similarly, we can construct a filtration $\Fil^k_\otimes = \sum_{|\bk|=k} \Fil^{\bk}$ on $R_\mu$ and $R_{\mu,I}$.
Let $W_{\bk,I}$ be $\gr^\bk R_{\mu,I}$.

\begin{prop}\label{prop:Wk}
We have an isomorphism $W_{\bk,I} \cong \oplus_{J \subset S, \bk(J) = \bk, J \cap I = \emptyset} \sigma_J$.
\end{prop}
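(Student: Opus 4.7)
The plan is to deduce the claim from the $I = \emptyset$ version in \cite[\S 3]{LMS}, which gives $\gr^\bk R_\mu \cong \bigoplus_{J \subset S,\, \bk(J) = \bk} \sigma_J$, combined with an analysis of the natural surjection $R_\mu \twoheadrightarrow R_{\mu,I}$. Since $\Fil^\bk R_{\mu,I}$ is by definition the image of $\Fil^\bk R_\mu$, each graded piece $W_{\bk,I}$ is canonically a quotient of $\gr^\bk R_\mu$ in the category of semisimple $\F_q[\GL_2(\F_q)]$-modules, and hence a direct sum of a subset of the $\sigma_J$ summands. The task reduces to proving that the surviving summands are exactly those with $J \cap I = \emptyset$.

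For the inclusion that any surviving $J$ satisfies $J \cap I = \emptyset$, I would exhibit, for each subset $J$ and each $\omega \in J$, a subquotient of $R_\mu$ realizing a non-split length-two extension with constituents $\sigma_J$ and $\sigma_{\{\omega\}}$ (placed in the appropriate order for the filtration). Such an extension ensures that any quotient of $R_\mu$ in which every copy of $\sigma_{\{\omega\}}$ has been killed must also kill the corresponding copy of $\sigma_J$; applied to $R_{\mu,I}$ for $\omega \in I$, this forces $\sigma_J$ out of $R_{\mu,I}$ whenever $J \cap I \neq \emptyset$. The non-split extensions required here should be extracted from the refined filtration analysis in \cite[\S 3]{LMS}, where the graded structure together with the extension classes between adjacent layers is described.

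For the reverse inclusion, I would construct a quotient $R_\mu/N$ whose Jordan--H\"older factors are exactly $\{\sigma_J : J \cap I = \emptyset\}$, where $N$ is the submodule generated by all occurrences (in the filtration) of $\sigma_{\{\omega\}}$ for $\omega \in I$. The crucial point is that $\{J : J \cap I \neq \emptyset\}$ is upward-closed in $(\mathcal{P}(S), \subset)$, so by the extension pattern invoked above, $N$ identifies with a sum of specific filtration pieces whose graded constituents are precisely the $\sigma_{J'}$ with $J' \cap I \neq \emptyset$. Then $R_\mu/N$ satisfies the defining universal property of $R_{\mu,I}$, so $R_{\mu,I} \cong R_\mu/N$, and the claimed decomposition of graded pieces follows.

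The main obstacle is the first inclusion: producing and controlling the non-split extensions between $\sigma_J$ and $\sigma_{\{\omega\}}$ (for $\omega \in J$) needed to propagate the vanishing of $\sigma_{\{\omega\}}$ to the vanishing of $\sigma_J$. This is essentially a statement about the extension graph of Serre weights inside $R_\mu$, and it is the technical heart of the argument; I would import the necessary structural input directly from \cite[\S 3]{LMS} rather than reprove it.
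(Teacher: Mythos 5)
Your overall plan---deduce the $I \neq \emptyset$ case from the $I = \emptyset$ graded structure of $R_\mu$ by propagating the vanishing of $\sigma_{\{\omega\}}$ through the filtration---is in the same spirit as the paper, which simply cites \cite[Prop.\ 3.6 and Thm.\ 3.14]{LMS}; the latter already describes the submodule lattice of $R_\mu$ (equivalently, the quotients $R_{\mu,I}$) in terms of down-sets of $\cP(S)$, so the statement is immediate from it.

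The specific mechanism you propose for the first inclusion has a gap. You ask, for every $\omega\in J$, for a non-split \emph{length-two} subquotient of $R_\mu$ with constituents $\sigma_J$ and $\sigma_{\{\omega\}}$. But when $\#J\geq 2$ these two weights lie at non-adjacent levels of $\Fil^{\bk}$, and such a direct length-two subquotient need not exist; it is also not what \cite[\S 3]{LMS} provides. What the argument actually needs is a chain of non-split \emph{covering} extensions $0\to\sigma_{J'}\to E\to\sigma_{J''}\to 0$ with $J''\subsetneq J'$ and $\#(J'\setminus J'')=1$ (these adjacent-level extensions are exactly the content of \cite[Prop.\ 3.8]{LMS}, used in the proof of Proposition \ref{prop:ext}), iterated along a saturated chain from $\{\omega\}$ up to $J$. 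At each step, multiplicity-freeness of the $\sigma_J$ in $R_\mu$ shows that the unique copy of $\sigma_{J'}$ lies in the kernel of $R_\mu\twoheadrightarrow R_{\mu,I}$ whenever the unique copy of $\sigma_{J''}$ does. Your reverse inclusion has a parallel issue: passing from ``$\{J : J\cap I\neq\emptyset\}$ is upward-closed'' to ``the corresponding sum of graded pieces is a submodule'' is precisely the submodule-lattice statement of \cite[Thm.\ 3.14]{LMS}. So your route is repairable, but once repaired it simply reconstructs the cited theorem rather than giving an independent argument.
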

\begin{proof}
This follows from \cite[Proposition 3.6 and Theorem 3.14]{LMS}.
\end{proof}

If $I$ is a subset of $S$ such that $I \cap \{\pm \omega^{(i)}\}$ has size at most one for all $i$, let $T_{\sigma,I}$ be the set of Deligne--Lusztig representations over $K$ of the form $R_w(\mu-w\eta)$ where $w_i = \Id$ (resp.~ $w_i \neq \Id$) if $\omega^{(i)} \in I$ (resp.~ $-\omega^{(i)} \in I$).
Fix an embedding $\tld{R}_\mu \into \oplus_{\sigma(\tau) \in T_{\sigma,\emptyset}} \sigma(\tau)$.
Let $\tld{R}_{\mu,I}$ be the quotient of $\tld{R}_\mu$ isotypic for the set $T_{\sigma,I}$ (which does not depend on the above embedding).
Note that $\tld{R}_{\mu,\emptyset}$ is equal to $\tld{R}_\mu$.

\begin{prop}\label{prop:projred}
The reduction of $\tld{R}_{\mu,I}$ modulo $p$ is $R_{\mu,I}$.
\end{prop}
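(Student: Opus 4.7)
The plan is to identify the kernels of the two surjections $R_\mu \twoheadrightarrow \tld{R}_{\mu,I}/p$ and $R_\mu \twoheadrightarrow R_{\mu,I}$. First, standard properties of projective envelopes over complete local rings give $\tld{R}_\mu/p \cong R_\mu$, settling the case $I = \emptyset$. For general $I$, the surjection $\tld{R}_\mu \twoheadrightarrow \tld{R}_{\mu,I}$ reduces modulo $p$ to a surjection $R_\mu \twoheadrightarrow \tld{R}_{\mu,I}/p$, and it suffices to show that this factors as an isomorphism through $R_{\mu,I}$.

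First I would show that no Serre weight $\sigma_{\{\omega\}}$ with $\omega \in I$ appears as a Jordan--H\"older factor of $\tld{R}_{\mu,I}/p$. By construction, $\tld{R}_{\mu,I}$ is an $\cO_K$-lattice in the $K$-representation $\bigoplus_{\sigma(\tau) \in T_{\sigma,I}} \sigma(\tau)^{\oplus m_\tau}$, where $m_\tau$ records the multiplicity of $\sigma$ in $\overline{\sigma(\tau)}$. Hence its semisimplification is determined by the Brauer character of this ambient rational representation, and its Jordan--H\"older factors lie among the Serre weights appearing in the reductions $\overline{\sigma(\tau)}$ for $\sigma(\tau) \in T_{\sigma,I}$. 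By the standard Jantzen-type decomposition of Deligne--Lusztig reductions (compare \cite[\S 3]{LMS}), the constraints $w_i = \Id$ for $\omega^{(i)} \in I$ and $w_i \neq \Id$ for $-\omega^{(i)} \in I$ defining $T_{\sigma,I}$ precisely exclude all Serre weights $\sigma_J$ with $J \cap I \neq \emptyset$. In particular $\sigma_{\{\omega\}}$ for $\omega \in I$ does not appear, so by the universal property of $R_{\mu,I}$ one obtains the desired factorization $R_\mu \twoheadrightarrow R_{\mu,I} \twoheadrightarrow \tld{R}_{\mu,I}/p$.

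To conclude that the second arrow is an isomorphism, I would compare $\F_q$-dimensions. By Proposition \ref{prop:Wk}, $\dim_{\F_q} R_{\mu,I} = \sum_{J \subset S,\, J \cap I = \emptyset} \dim_{\F_q} \sigma_J$, while $\dim_{\F_q} \tld{R}_{\mu,I}/p$ equals the $K$-dimension of $\bigoplus_{\sigma(\tau) \in T_{\sigma,I}} \sigma(\tau)^{\oplus m_\tau}$. Expanding the latter via the Brauer character decomposition of each $\overline{\sigma(\tau)}$ into Serre weights, the equality of the two dimensions reduces, by the same Serre-weight bookkeeping as above, to the known $I = \emptyset$ identity $\dim_{\F_q} R_\mu = \sum_\tau m_\tau \dim_K \sigma(\tau)$, restricted to those pairs $(\tau, J)$ compatible with the constraints defining $T_{\sigma,I}$ and $R_{\mu,I}$ respectively.

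The main obstacle is making precise the dictionary between the constraints on $w$ cutting out $T_{\sigma,I}$ at the level of tame types and the constraints on the subsets $J$ cutting out the Serre weights of $R_{\mu,I}$. Once this translation is rigorously extracted from the Jantzen-type description of Deligne--Lusztig reductions developed in \cite{LMS,BP}, both the Jordan--H\"older vanishing in the second paragraph and the dimension identity in the third follow, and the proposition is established. Conceptually, this reflects the transversality between generic tame types highlighted in the introduction.
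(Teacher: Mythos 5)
The proposal is correct and takes essentially the same route as the paper: use the universal property of $R_{\mu,I}$ to factor the reduction of $\tld{R}_\mu \surj \tld{R}_{\mu, I}$ through $R_{\mu,I}$, then show the resulting surjection $R_{\mu,I} \surj \tld{R}_{\mu,I}/p$ is an isomorphism by a size comparison. The one real difference is in that final comparison. The paper counts Jordan--H\"older lengths: by Proposition \ref{prop:Wk}, $R_{\mu,I}$ has length $2^{2f-\#I}$, and $\tld{R}_{\mu,I}/p$ is the reduction of a lattice in $2^{f-\#I}$ Deligne--Lusztig representations, each of whose reduction has length $2^f$, giving the same total. You instead compare dimensions, which is more delicate: since distinct $\sigma_J$ generally have distinct dimensions, matching $\sum_{J\cap I = \emptyset}\dim\sigma_J$ against $\sum_{\sigma(\tau)\in T_{\sigma,I}}\dim\sigma(\tau)$ essentially requires exhibiting a bijection between $\{J\subset S: J\cap I=\emptyset\}$ and the constituents of $\bigoplus_{\tau\in T_{\sigma,I}}\overline{\sigma}(\tau)$ that preserves the Serre weight, which is exactly the bookkeeping you flag as the main obstacle. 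That matching is true and extractable from \cite{LMS}, but switching your finishing step to a length count makes it unnecessary and removes the obstacle you identified. Two smaller points: for generic tame types the reductions $\overline{\sigma}(\tau)$ are multiplicity-free, so your $m_\tau$ is always $1$; and to invoke the universal property and factor through $R_{\mu,I}$ you only need to exclude the singleton weights $\sigma_{\{\omega\}}$ for $\omega\in I$, not all $\sigma_J$ with $J\cap I\neq\emptyset$ (though the latter is also true).
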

\begin{proof}
For each $\omega\in I$, $\sigma_{\{\omega\} } \notin \JH(\sigmabar(\tau))$ for all $\sigma(\tau) \in T_{\sigma,I}$.
Thus, there is a canonical quotient map $R_{\mu,I} \ra \overline{R}_{\mu,I}$, where $\overline{R}_{\mu,I}$ is the reduction of $\tld{R}_{\mu,I}$.
By Proposition \ref{prop:Wk}, $R_{\mu,I}$ has length $2^{2f-\#I}$.
Since $\overline{R}_{\mu,I}$ is the reduction of a lattice in the direct sum of $2^{f-\#I}$ types, each of whose reduction has length $2^f$ (see \cite{diamond}), it also has length $2^{2f-\#I}$.
Since both objects have the same length, this surjection must be an isomorphism.
\end{proof}

Again, let $I\subset S$.
Let $W_{\bk,\bk+1,I}$ be $\Fil^\bk R_{\mu,I}/(\Fil^{k+2}_\otimes R_{\mu,I} \cap \Fil^\bk R_{\mu,I})$.
Note that $W_{\bk,\bk+1,I}$ is multiplicity free since $W_{\bk,\bk+1,\emptyset}$ (which is $W_{\bk,\bk+1}$ in \cite[\S 3]{LMS}) is by \cite[Proposition 3.6 and Lemma 3.7]{LMS}.

\begin{prop}\label{prop:ext}
Suppose that $J \subset J'$, $\#J'\setminus J = 1$, and $J' \cap I = \emptyset$.
Let $\bk$ and $\bk'$ be $\bk(J)$ and $\bk(J')$, respectively.
Then there is a subquotient of $W_{\bk,\bk+1,I}$ which is the unique up to isomorphism nontrivial extension of $\sigma_J$ by $\sigma_{J'}$.
\end{prop}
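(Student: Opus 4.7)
The plan is to reduce Proposition~\ref{prop:ext} to the corresponding statement for $I = \emptyset$ and then propagate the extension through the surjection induced by the quotient $R_\mu \twoheadrightarrow R_{\mu,I}$. The $I = \emptyset$ version, namely that $W_{\bk,\bk+1}$ contains as a subquotient the unique (up to isomorphism) nontrivial extension of $\sigma_J$ by $\sigma_{J'}$, should follow from \cite[Proposition~3.6 and Lemma~3.7]{LMS} together with the standard one-dimensional $\Ext^1$-computation between adjacent generic Serre weights. I would therefore begin by fixing submodules $A \subset B \subset W_{\bk,\bk+1,\emptyset}$ with $E \defeq B/A$ isomorphic to this nonsplit extension.

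Next, the surjection $R_\mu \twoheadrightarrow R_{\mu,I}$ respects the filtration $\Fil^\bk$, so it induces a surjection $\pi : W_{\bk,\bk+1,\emptyset} \twoheadrightarrow W_{\bk,\bk+1,I}$. By Proposition~\ref{prop:Wk}, passing to graded pieces identifies $\pi$ with the quotient by those $\sigma_{J''}$ (with $\bk(J'') \in \{\bk,\bk+1\}$) for which $J'' \cap I \ne \emptyset$; by the multiplicity freeness of $W_{\bk,\bk+1,\emptyset}$ recalled above, these are exactly the Jordan--H\"older constituents of $\ker \pi$.

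The heart of the argument is then to verify that the image subquotient $\pi(B)/\pi(A) \subset W_{\bk,\bk+1,I}$ is still isomorphic to $E$. The kernel of the induced surjection $E \twoheadrightarrow \pi(B)/\pi(A)$ is $N \defeq (\ker\pi \cap B + A)/A$, a submodule of $E$. Its Jordan--H\"older factors must lie both among those of $E$, namely $\{\sigma_J,\sigma_{J'}\}$, and among those of $\ker\pi$, namely the $\sigma_{J''}$ with $J'' \cap I \ne \emptyset$. Since $J' \cap I = \emptyset$ by hypothesis and $J \subset J'$ forces $J \cap I = \emptyset$ as well, these two collections are disjoint, so $N$ has no Jordan--H\"older factors and must be zero. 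Hence $\pi(B)/\pi(A) \isom E$, and this is the desired subquotient of $W_{\bk,\bk+1,I}$.

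The main obstacle is really the $I = \emptyset$ case: extracting from the submodule structure worked out in \cite{LMS} an explicit pair $A \subset B$ realizing the extension and matching it with the abstract $\Ext^1$-class. Once that is in hand, the passage to arbitrary $I$ is purely formal, driven by multiplicity freeness and the combinatorics of Jordan--H\"older content under $\pi$. Note also that the uniqueness statement in the conclusion is an abstract statement about the extension class and so is unaffected by the reduction to a subquotient.
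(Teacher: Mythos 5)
Your reduction from general $I$ to $I=\emptyset$ is correct, and it is essentially the argument that the paper's one-line proof compresses: the paper cites Proposition~\ref{prop:Wk} precisely because it controls which constituents survive the passage from $W_{\bk,\bk+1,\emptyset}$ to $W_{\bk,\bk+1,I}$, and your Jordan--H\"older disjointness argument (using $J \subset J'$ and $J'\cap I = \emptyset$ to see that neither $\sigma_J$ nor $\sigma_{J'}$ can lie in $\ker\pi$) is the clean way to make that explicit. The one thing to fix is your citation for the $I=\emptyset$ input: Proposition~3.6 and Lemma~3.7 of [LMS] only give you multiplicity freeness, which is what the paper quotes them for in the remark preceding this proposition; the statement that $W_{\bk,\bk+1}$ actually realizes the nonsplit extension of $\sigma_J$ by $\sigma_{J'}$ (together with its uniqueness) is [LMS, Proposition~3.8], which is what the paper's proof cites. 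So the gap you honestly flag in your last paragraph (``extracting from the submodule structure in [LMS] an explicit pair $A\subset B$'') is exactly what [LMS, Prop.~3.8] supplies; with that citation in hand your argument is complete.
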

\begin{proof}
This follows immediately from Proposition \ref{prop:Wk} and \cite[Proposition 3.8]{LMS}.
\end{proof}

\begin{prop}\label{prop:exseq}
Suppose that the size of $I \cap \{\pm\omega^{(i)}\}$ is at most one for all $i$ and that $I \cap \{\pm\omega^{(j)}\} = \emptyset$ for some $j$.
Then there is an exact sequence
\begin{equation}\label{eqn:exact}
0 \ra \tld{R}_{\mu,I} \ra \tld{R}_{\mu,I\cup \{\omega^{(j)}\} } \oplus \tld{R}_{\mu,I\cup \{-\omega^{(j)}\} } \ra R_{\mu,I\cup \{\pm\omega^{(j)}\} } \ra 0,
\end{equation}
where the second (resp.~ third) map is the sum (resp.~ difference) of the natural projections.
\end{prop}
\begin{proof}
The second map of (\ref{eqn:exact}) is clearly injective since it is after inverting $p$ and $\tld{R}_{\mu,I}$ is $\cO_K$-flat.
We claim that the cokernel of this map is $p$-torsion.
Let $\sigma_{\{\omega^{(j)}\} } = F(\mu'-\eta)$ and consider a map $\tld{R}_{\mu'} \ra \tld{R}_{\mu,I}$ such that the composition with the projection 
\[
\tld{R}_{\mu,I} \surj R_{\mu,I} \surj R_{\mu,I}/\Fil^2_\otimes R_{\mu,I}
\]
is nonzero.
The composition of $\tld{R}_{\mu'} \ra \tld{R}_{\mu,I}$ with the natural surjection $\tld{R}_{\mu,I} \surj \tld{R}_{\mu,I\cup \{\omega^{(j)}\} }$ is zero since $\sigma_{\{\omega^{(j)}\} }\notin \JH(R_{\mu,I\cup \{\omega^{(j)}\} })$.
\begin{lemma}\label{lemma:containsp}
The image of the composition $\tld{R}_{\mu'} \ra \tld{R}_{\mu,I}$ with the natural surjection $\tld{R}_{\mu,I} \surj \tld{R}_{\mu,I\cup \{-\omega^{(j)}\} }$ contains $p\tld{R}_{\mu,I\cup \{-\omega^{(j)}\} }$.
\end{lemma}
\noindent With Lemma \ref{lemma:containsp} and its analogue for $\tld{R}_{\mu,I\cup \{\omega^{(j)}\} }$, we would see that the image of 
\[
\tld{R}_{\mu,I} \ra \tld{R}_{\mu,I\cup \{\omega^{(j)}\} } \oplus \tld{R}_{\mu,I\cup \{-\omega^{(j)}\} }
\]
contains $p\tld{R}_{\mu,I\cup \{\omega^{(j)}\} } \oplus p\tld{R}_{\mu,I\cup \{-\omega^{(j)}\} }$, establishing our claim.
\begin{proof}[Proof of Lemma \ref{lemma:containsp}]
Fix a map $\tld{R}_\mu\ra \tld{R}_{\mu'}$ such that the composition with the projection to $R_{\mu'}/\Fil^2_\otimes R_{\mu'}$ is nonzero.
It suffices to show that the image, denoted $Q$, of the composition of $\tld{R}_\mu\ra \tld{R}_{\mu'}$ with the above $\tld{R}_{\mu'} \ra \tld{R}_{\mu,I} \surj \tld{R}_{\mu,I\cup \{-\omega^{(j)}\} }$ is $p\tld{R}_{\mu,I\cup \{-\omega^{(j)}\} }$.
On the one hand, we see that $Q$ is in $p\tld{R}_{\mu,I\cup \{-\omega^{(j)}\} }$ by reducing modulo $p$ and using Propositions \ref{prop:projred} and \ref{prop:ext}.
Let $\sigma(\tau)$ be a Jordan--H\"older factor of $\tld{R}_{\mu,I}[p^{-1}]$ and let $\sigma^\circ(\tau) \subset \sigma(\tau)$ be the unique lattice up to homothety with cosocle isomorphic to $\sigma$ (see \cite[Lemma 4.1.1]{EGS}).
Fix a surjection from $\tld{R}_{\mu,I}$ to $\sigma^\circ(\tau)$.
By reducing mod $p$, we see that the image of the composition of $\tld{R}_{\mu'} \ra \tld{R}_{\mu,I}$ with this surjection is a saturated lattice $\sigma^{\circ\circ}(\tau)$ with cosocle $\sigma_{\{\omega^{(j)}\} }$.
Similarly, the image of $Q$ under this surjection is a saturated lattice in $\sigma^{\circ\circ}(\tau)$ with cosocle isomorphic to $\sigma$.
This lattice is $p \sigma^\circ(\tau)$ by \cite[Theorem 5.1.1]{EGS}.
Thus, the composition $Q \subset p\tld{R}_{\mu,I\cup \{-\omega^{(j)}\} } \surj p \sigma^\circ(\tau)$ is an isomorphism upon taking cosocles.
We see that $Q$ must be equal to $p\tld{R}_{\mu,I\cup \{-\omega^{(j)}\} }$.
\end{proof}

Let $R$ be the cokernel of the second map in (\ref{eqn:exact}), which is $p$-torsion by our first claim.
Then the exact sequence
\begin{equation}\label{eqn:char0}
0 \ra \tld{R}_{\mu,I} \ra \tld{R}_{\mu,I\cup \{\omega^{(j)}\} } \oplus \tld{R}_{\mu,I\cup \{-\omega^{(j)}\} } \ra R \ra 0
\end{equation}
induces an exact sequence
\begin{equation}\label{eqn:charp}
R_{\mu,I} \ra R_{\mu,I\cup \{\omega^{(j)}\} } \oplus R_{\mu,I\cup \{-\omega^{(j)}\} } \ra R \ra 0
\end{equation}
by Proposition \ref{prop:projred}.
By taking cosocles, (\ref{eqn:charp}) induces an exact sequence
\begin{equation}\label{eqn:cosoc}
\cosoc R_{\mu,I} \ra \cosoc R_{\mu,I\cup \{\omega^{(j)}\} } \oplus \cosoc R_{\mu,I\cup \{-\omega^{(j)}\} } \ra \cosoc R \ra 0
\end{equation}
Note that $\cosoc R_{\mu,I}$, $\cosoc R_{\mu,I\cup \{\omega^{(j)}\} }$, and $\cosoc R_{\mu,I\cup \{-\omega^{(j)}\} }$ are all isomorphic to $\sigma$ and that the composition of first map of (\ref{eqn:cosoc}) with either projection is nonzero.
Thus $\cosoc R$ is isomorphic to $\sigma$ and the restriction of the second map of (\ref{eqn:cosoc}) to either summand is nonzero.
We conclude that the restriction of the second map in (\ref{eqn:charp}) to either summand is surjective.
By definition, the maximal representation which is a quotient of both $R_{\mu,I\cup \{\omega^{(j)}\} }$ and $R_{\mu,I\cup \{-\omega^{(j)}\} }$ is $R_{\mu,I\cup \{\pm\omega^{(j)}\} }$.
Thus, there is a surjection $R_{\mu,I\cup \{\pm\omega^{(j)}\} }\surj R$.
On the other hand, it is easy to see that the composition $R_{\mu,I} \ra R_{\mu,I\cup \{\omega^{(j)}\} } \oplus R_{\mu,I\cup \{-\omega^{(j)}\} } \ra R_{\mu,I\cup \{\pm\omega^{(j)}\} }$ is zero, where the second map is the difference of the natural projections.
Thus, there is a surjection $R \surj R_{\mu,I\cup \{\pm\omega^{(j)}\} }$.
Since $R$ and $R_{\mu,I\cup \{\pm\omega^{(j)}\} }$ are finite length objects, they must be isomorphic.
\end{proof}

\section{Multitype Barsotti--Tate deformation rings}\label{sec:defring}

\subsection{\'Etale $\varphi$-modules} \label{subsec:phi}

Let $K_\infty$ be the infinite extension obtained by adjoining compatible $p$-power roots of $-p$ to $K$.
Let $\cO_{\cE,K}$ denote the $p$-adic completion of $\cO_K(\!(v)\!)$, and let $\cO_{\cE^{\mathrm{un}},K}$ denote the $p$-adic completion of a maximal connected \'etale extension of $\cO_{\cE,K}$.
For $R$ a complete local Noetherian $\cO$-algebra, let $\Phi\textrm{-}\mathrm{Mod}^{\et}(R)$ be the category of \'etale $\varphi$-modules over $\cO_{\cE,K}\otimes_{\Z_p} R$, and let $\mathrm{Rep}_{G_{K_{\infty}}}(R)$ be the category of (continuous) representations of $G_{K_{\infty}}$ over $R$.
Fontaine defined an exact anti-equivalence of tensor categories
\[\mathbb{V}^*: \Phi\textrm{-}\mathrm{Mod}^{\et}(R) \ra \mathrm{Rep}_{G_{K_{\infty}}}(R)\]
by $\mathbb{V}^*(\cM) = ((\cM\otimes \cO_{\cE^{\mathrm{un}},K})^{\varphi=1})^\vee$.

For a natural number $d$, let $\varpi_d \in E$ be a root of $u^{p^{df}-1}+p$.
Let $K_d$ be the degree $d$ unramified extension of $K$.
We define the fundamental character
\begin{align*}
\omega_{df}: G_{K_d} &\ra \cO^\times \\
g &\mapsto \frac{g(\varpi_d)}{\varpi_d},
\end{align*}
which does not depend on the choice of $\varpi_d$.
For $\alpha \in \F^\times$, denote by $\nr_\alpha$ the unramified character of $G_K$ taking a geometric Frobenius element to $\alpha$.

Let $\rhobar: G_K \ra \GL_2(\F)$ be a continuous Galois representation.
If $\rhobar$ is reducible, then it is an extension of 
\[
\nr_{\alpha'} \omega_f^{\sum_{i=0}^{f-1} \mu_{2,i}p^i} \textrm{ by } \nr_\alpha \omega_f^{\sum_{i=0}^{f-1} \mu_{1,i}p^i}
\]
for some dominant $p$-restricted character $\mu_\rhobar = (\mu_{1,i},\mu_{2,i})_i \in X^*(T)$ and some $\alpha$ and $\alpha' \in \F^\times$.
If $\rhobar$ is irreducible, then $\rhobar$ is 
\[
\Ind_{G_{K_2}}^{G_K} \nr_{-\alpha}\omega_{2f}^{\sum_{i=0}^{f-1} \mu_{1,i}p^i+p^f\sum_{i=0}^{f-1} \mu_{2,i}p^i}
\]
where $\mu_\rhobar$ again is a dominant $p$-restricted element of $X^*(T)$ and $\alpha \in \F^\times$.
We note that the main result of this paper in the case when $\rhobar$ is irreducible already appears in \cite{LMS} and \cite{HW}, and so this case can be ignored if the reader desires.
\cite{BDJ} attaches to $\rhobar$ a set $W(\rhobar)$ of Serre weights (see also \cite[\S 4, Proposition A.3]{breuil} with the notation $\mathcal{D}(\rhobar)$).

In both the reducible and irreducible cases, we now assume that $\mu_\rhobar\in X^*(T)$ with $\mu_i = (\mu_{1,i},\mu_{2,i}) = (c_i,1)$ with $3 < c_i < p-2$ for all $i\in \Z/f$.
For $i \in \Z/f$, let $a_i$ be an element of $\F$.
Let $\cM = \prod_i \F(\!(v)\!)\fE^i \oplus \F(\!(v)\!)\fF^i$ be the $\varphi$-module defined by
\begin{eqnarray*}
i \neq 0 : & &
\begin{cases}
\varphi(\fE^{i-1}) & = v^{c_{f-i}}\fE^i+a_{i-1}v^{c_{f-i}}\fF^i \\
\varphi(\fF^{i-1}) & = v\fF^i
\end{cases} \\
i=0, \, \rhobar \textrm{ reducible} : & &
\begin{cases}
\varphi(\fE^{f-1}) & = \alpha v^{c_0}\fE^0+\alpha a_{f-1}v^{c_0}\fF^0 \\
\varphi(\fF^{f-1}) & = \alpha' v\fF^0
\end{cases} \\
i=0, \, \rhobar \textrm{ irreducible} : & &
\begin{cases}
\varphi(\mathfrak{e}^{f-1}) & = \alpha v^{c_0}\fF^0  \\
\varphi(\mathfrak{f}^{f-1}) & = - v\fE^0
\end{cases} \\
\end{eqnarray*}
(here the $i$-th factor corresponds to the embedding $\iota_{-i}$).

\begin{prop}\label{prop:rhophi}
There are unique values $a_i \in \F$ for $i\in \Z/f$ such that $\mathbb{V}^*(\cM)$ is isomorphic to the restriction $\rhobar|_{G_{K_\infty}}$.
\end{prop}
\begin{proof}
Note that $\rhobar$ is Fontaine--Laffaille by the genericity condition.
We use Fontaine--Laffaille theory as in \cite[Appendix A]{breuil}.
We address the case when $\rhobar$ is reducible and leave the irreducible case to the reader.
Let $M = \oplus_{i\in \Z/f} M^{(i)}$ with $M^{(i)} = k_E e^{(i)} \oplus k_E f^{(i)}$ be the Fontaine--Laffaille module with
\[
\Fil^1 M^{(i)} = M^{(i)}, \, \Fil^2 M^{(i)} = \Fil^{c_{f-i}} M^{(i)} = k_E f^{(i)}, \, \Fil^{c_{f-i}+1} M^{(i)} = 0, \\
\]
\begin{align*}
\varphi(e^{(i)}) &=  e^{(i+1)}, \\
\varphi_{c_{f-i}}(f^{(i)}) &= f^{(i+1)}+a_{i-1} e^{(i+1)}, \, \textrm{for  } i\neq 1 \textrm{ and } \\
\varphi(e^{(1)}) &=  \alpha' e^{(2)}, \\
\varphi_{c_{f-1}}(f^{(1)}) &= \alpha f^{(2)}+\alpha' a_0 e^{(2)},
\end{align*}
for $a_i \in k_E$ such that $\rhobar \cong \Hom_{\Fil^\bullet,\varphi_.}(M,A_{\mathrm{cris}} \otimes_{\bZ_p} \bF_p)$ (see e.g.~\cite[(16)]{breuil}).

Let $\fM$ be the $\F_q[\![v]\!]\otimes_{\Z_p} \F$-submodule of $\cM$ generated by $(\fE_i)_{i\in \Z/f}$ and $(\fF_i)_{i\in \Z/f}$.
Note that $\varphi$ maps $\fM$ to itself.
Then a calculation (cf.~\cite[\S 7.4]{EGS} with $J = \emptyset$) shows that $\Theta_{p-1}(\fM) \cong \mathcal{F}_{p-1}(M)$, where the functors $\Theta_{p-1}$ and $\mathcal{F}_{p-1}$ are introduced in \cite[Appendix A]{EGS}.
The result now follows from \cite[Propositions A.3.2 and A.3.3]{EGS}.
\end{proof}

For the rest of this section, we fix, for each $i\in \Z/f$, $a_i\in\F$, the unique element as in Proposition \ref{prop:rhophi}.
In doing so, we thus fix $\cM$.
If $\rhobar$ is irreducible, let $S_\rhobar$ be the set $\{-\omega^{(0)}, \omega^{(1)}, \ldots ,\omega^{(f-1)}\}$.
Otherwise, let $S_\rhobar$ be the set $\{ \omega^{(i)} | a_{f-1-i} = 0\}$.

\begin{prop}\label{prop:serrewts}
The set $W(\rhobar)$ equals $\{ \sigma_J| J \subset S_\rhobar \}$ where $\sigma_J$ is defined with respect to $\mu_\rhobar$.
\end{prop}
\begin{proof}
This follows from a direct calculation using \cite[\S 4]{breuil}.
\end{proof}

\subsection{Kisin modules and deformation rings}

To describe tamely potentially Barsotti--Tate deformation rings, we will use the theory of Kisin modules with descent datum.
Let $\tau$ be the tame principal series type $\eta_1\oplus \eta_2:I_K \ra \GL_2(\F_q)$ where $\eta_k = \omega_f^{-\mathbf{a}_k^{(0)}}$ for $k=1$ and $2$ and 
\[\mathbf{a}_k^{(j)} = \sum_{i=0}^{f-1} a_{k,-j+i}p^i,\]
where $a_{k,i} \in \Z$.
We will suppose throughout that $2\leq |a_{1,i}-a_{2,i}| \leq p-3$ for all $i\in \Z/f$ and call such a tame principal series type {\it generic}.
We will say a tame inertial type $\tau'$ is generic if its restriction to the quadratic unramified extension of $K$ is a generic principal series type.

The {\it orientation} of $(\mathbf{a}_1,\mathbf{a}_2)$ is the element $s\in W$ such that $\mathbf{a}^{(j)}_{s_j(1)}>\mathbf{a}^{(j)}_{s_j(2)}$.
By an abuse of notation, we say that the orientation of $(\mathbf{a}_1,\mathbf{a}_2)$ is an orientation for $\tau$ if $\tau$ can be expressed in terms of $(\mathbf{a}_1,\mathbf{a}_2)$ as above.

Let $R$ be an $\cO$-algebra.
For a principal series type $\tau$, we will consider Kisin modules over $R$ with descent datum of type $\tau$ (see \cite[Definition 2.4]{LLLM}).
We will say that such a Kisin module $\fM_R$ is in $Y^{(0,1),\tau}(R)$ if the cokernels of $\phi_{\fM_R}:\varphi^*(\fM_R) \ra \fM_R$ and $\phi_{\det \fM_R}:\varphi^*(\det \fM_R) \ra \det \fM_R$ are annihilated by $E(u) = u^{q-1}+p$.
Let $v$ be $u^{q-1}$.

Let $s$ be an orientation for a generic tame principal series type $\tau$ and $\fM_R$ be an element of $Y^{(0,1),\tau}(R)$.
Then $\fM_R$ can be described by the matrices $\Mat_{\beta}(\phi^{(i)}_{\fM_R \otimes_R \F,s_{i+1}(2)})$ after choosing an eigenbasis $\beta$ (see \cite[Definition 2.11]{LLLM}).
The following is a generalization of \cite[Theorem 4.1]{LLLM} in the case of $\GL_2$, where $\beta$ is allowed to have a slightly more general form than a gauge basis.

\begin{thm}\label{thm:fhdef}
Let $\tau$ be a tame generic principal series type and let $s = (s_i)_i \in W$ be an orientation for $\tau$.
Let $R$ be a complete local Noetherian $\cO$-algebra with residue field $\F$.
Let $\fM_R\in Y^{(0,1),\tau}(R)$ with $\Mat_{\overline{\beta}}(\phi^{(i)}_{\fM_R \otimes_R \F,s_{i+1}(2)})$ given by 
\[
\overline{A}_1 = \begin{pmatrix}
  v &   \\
  a_iv & 1 \\
 \end{pmatrix},
\overline{A}_2 = \begin{pmatrix}
   & 1 \\
  v &  \\
 \end{pmatrix},
\overline{A}_3 = \begin{pmatrix}
   & 1  \\
  v & a_i \\
 \end{pmatrix},
\textrm{ or }
\overline{A}_4 = \begin{pmatrix}
  1 &   \\
   & v \\
 \end{pmatrix}
\]
for $i\neq 0$ and $\overline{A}_j\begin{pmatrix}
  \alpha &  \\
   & \alpha' \\
 \end{pmatrix}$ for $i=0$, where $\overline{\beta}$ is an eigenbasis for $\fM_R \otimes_R \F$.
Then there is a unique eigenbasis $\beta$ of $\fM_R$ up to scaling lifting $\overline{\beta}$ such that $\Mat_\beta(\phi^{(i)}_{\fM_R,s_{i+1}(2)})$ is given by 
\[
A_1 = \begin{pmatrix}
  v+p &   \\
  (X_i+[a_i])v & 1 \\
 \end{pmatrix},
A_2 = \begin{pmatrix}
  -Y_i & 1 \\
  v & X_i \\
 \end{pmatrix}, \]
\[
A_3 = \begin{pmatrix}
  -p(X_i+[a_i])^{-1} & 1  \\
  v & X_i+[a_i] \\
 \end{pmatrix},
\textrm{ or }
A_4 = \begin{pmatrix}
  1 & -Y_i \\
   & v+p \\
 \end{pmatrix}, \\
\]
respectively, for $i\neq 0$ and $A_jD(\alpha,\alpha')$ with $A_j$ as above for $i=0$.
Here $[\cdot]$ denotes the Teichm\"uller lift, $X_iY_i = p$ for $A_2$, and $D(\alpha,\alpha') = \begin{pmatrix}
  [\alpha]+ X_\alpha&  \\
   & [\alpha']+X_{\alpha'} \\
 \end{pmatrix}$.
\end{thm}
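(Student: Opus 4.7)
The plan is to adapt the proof of \cite[Theorem 4.1]{LLLM} to this slightly generalized setup, in which $\overline{\beta}$ carries the Teichm\"uller terms $[a_i]$ rather than being a strict gauge basis. The argument proceeds by a successive approximation to normalize an arbitrary lift of $\overline{\beta}$, followed by a rigidity check to establish uniqueness up to scaling.

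First, I would choose an arbitrary eigenbasis $\beta^{(0)}$ of $\fM_R$ lifting $\overline{\beta}$; such a lift exists because the descent-data isotypic decomposition of $\fM_R$ can be lifted by Nakayama's lemma. Writing $M_i^{(0)} = \Mat_{\beta^{(0)}}(\phi^{(i)}_{\fM_R,s_{i+1}(2)})$, these matrices lift the prescribed $\overline{A}_j$, and the cokernels of their $\varphi$-linearizations are killed by $E(u) = v+p$ since $\fM_R \in Y^{(0,1),\tau}(R)$.

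Second, I would construct $\beta$ by successive approximation. Inductively, suppose $\beta^{(n)}$ has been found such that $M_i^{(n)} := \Mat_{\beta^{(n)}}(\phi^{(i)}_{\fM_R,s_{i+1}(2)})$ agrees with the prescribed shape $A_j$ modulo $\fm_R^n$ for an appropriate choice of lifts of the parameters $X_i, Y_i$ (and $X_\alpha, X_{\alpha'}$ at $i=0$). I then seek a change-of-basis matrix $U^{(n)} \equiv \Id \pmod{\fm_R^n}$, compatible with the descent data, such that $\beta^{(n+1)} := U^{(n)}\beta^{(n)}$ achieves the normal form modulo $\fm_R^{n+1}$. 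The equation
\[ U^{(n)}_{i+1} M^{(n)}_i = M^{(n+1)}_i \varphi(U^{(n)}_i), \]
read in the graded piece $\fm_R^n/\fm_R^{n+1}$, becomes a linear system in the entries of $U^{(n)}$ with coefficients of bounded $v$-degree. The height-$(0,1)$ condition bounds the $v$-degree of the $M_i^{(n)}$-entries, and descent-data compatibility forces certain Fourier coefficients of $U^{(n)}$ to vanish. The genericity hypothesis $2 \leq |a_{1,i}-a_{2,i}| \leq p-3$ then ensures that this linear system is solvable within the prescribed shape, exactly as in \cite[Theorem 4.1]{LLLM}. The $\fm_R$-adic completeness of $R$ yields convergence to the desired basis $\beta$.

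Finally, for uniqueness, let $\beta$ and $\beta'$ be two such bases lifting $\overline{\beta}$ and set $U = \beta'\beta^{-1}$. Then each $U_i \equiv \Id \pmod{\fm_R}$ and is descent-data compatible, and the compatibility equation $U_{i+1} M_i = M_i' \varphi(U_i)$ together with the rigidity of the shape of $A_j$ (each entry prescribed up to at most one deformation parameter) forces each $U_i$ to be a scalar matrix with $v$-constant entries in $R^\times$. Comparing at the twist $i=0$ then collapses the $f$-tuple of scalars to a single global unit, proving uniqueness up to scaling. The main obstacle is verifying that, in the presence of the extra Teichm\"uller perturbations $[a_i]$, the inductive linear system at each stage still respects the prescribed entry shape so that the genericity bound on $|a_{1,i}-a_{2,i}|$ suffices for solvability. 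Since the mod-$p$ matrices $\overline{A}_j$ differ from true gauge-basis matrices only by a triangular perturbation that is preserved throughout normalization, the bookkeeping of \cite[Theorem 4.1]{LLLM} adapts without essentially new difficulty; what is genuinely new is merely tracking these extra Teichm\"uller lifts through the successive approximation.
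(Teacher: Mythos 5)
Your proposal takes essentially the same route as the paper: both adapt the successive-approximation normalization of \cite[Theorems 4.1 and 4.16]{LLLM} (existence, then uniqueness) to eigenbases carrying the extra Teichm\"uller perturbations $[a_i]$. The paper is, however, considerably more specific about what the adaptation actually requires---a re-weighted degree function $d_R(P) = \min_k 2v_R(r_k)+k$ in place of \cite[Definition 4.2]{LLLM}, a modified middle column of \cite[Table 5]{LLLM}, the convention that the pivots for $\overline{A}_3$ coincide with those for $\overline{A}_2$, the replacement $A_{22}^{(i)} = vP_{22} + [a_i] + Q_{22}$ in the analogue of \cite[Lemma 4.10]{LLLM}, and finally deducing the explicit shapes of the $A_j$ from the condition that $v+p$ divide the determinant---whereas your sketch treats all of this as routine bookkeeping, which is precisely where the content of the adaptation lies.
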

\begin{proof}
The proof is similar to the proof of \cite[Theorems 4.1 and 4.16]{LLLM} which prove existence and uniqueness of $\beta$, respectively.
We describe some of the key points.
We modify \cite[Definition 4.2]{LLLM}, defining $d_R(P) = \min_k 2v_R(r_k)+k$ if $P = \sum_k r_k v^k \in R[\![v]\!]$.
Then the analogue of \cite[Proposition 4.3]{LLLM} holds (see \cite[Remark 4.4]{LLLM}).
The entry in the middle column of \cite[Table 5]{LLLM} becomes 
\[
\begin{pmatrix}
  1^* &   \\
  v(\leq 0) & 0^* \\
 \end{pmatrix},
\begin{pmatrix}
  \leq 0 & 0^* \\
  1^* & \leq 0 \\
 \end{pmatrix},
\begin{pmatrix}
  \leq 0 & 0^* \\
  1^* & \leq 0 \\
 \end{pmatrix},
\textrm{ or }
\begin{pmatrix}
   0^* & \leq 0 \\
  &  1^* \\
 \end{pmatrix},\]
respectively, and we modify \cite[Definition 4.5]{LLLM} for $E^{(i)}$ appropriately.
For $1\leq m,k \leq 2$, we define $\delta(A^{(i)}_{mk})$ to be $d_R(E^{(i)}_{mk})$ if $\overline{A}^{(i)} \neq \overline{A}_3$.
If $\overline{A}^{(i)} = \overline{A}_3$, we define $\delta(A^{(i)}_{mk})$ to be $d_R(E^{(i)}_{mk})$ (resp.~$d_R(E^{(i)}_{mk})+1$) if $k=1$ (resp.~if $k=2$).
Finally, we let
\[\delta(A^{(i)}) = \min_{1\leq m,k\leq 2}\{\delta(A_{mk}^{(i)})\}.\]
The analogue of \cite[Proposition 4.6]{LLLM} holds, replacing $3+ d_R(x^{(j)})$ with $2+d_R(x^{(j)})$.
We define the notion of pivots for $\overline{A}^{(i)} \neq \overline{A}_3$ as in the \cite[Definition 4.8]{LLLM}, and define the pivots in the case of $\overline{A}^{(i)} = \overline{A}_3$ to be the same as the pivots in the case of $\overline{A}_2$.
The analogue of \cite[Lemma 4.10]{LLLM} holds except that the second equation of \emph{loc.~cit.}~is changed to $A_{22}^{(i)} = vP_{22} + [a_i] + Q_{22}$ when $\overline{A}^{(i)} = \overline{A}_3$.
Then the analogues of \cite[Proposition 4.11, Proposition 4.13, and Lemma 4.14]{LLLM} give the eigenbasis $\beta$.

We give more details for the algorithm in the case $\overline{A}^{(i)} = \overline{A}_3$. 
We let $\delta>1$ be an integer.
Suppose that $\delta(A^{(i)})$, which is necessarily greater than one, is $\delta$.
Then there is an $x \in R[\![v]\!]$ with $d_R(x) \geq \delta - 1$ such that ${A}^{\prime,(i)} \defeq D_{22}(x)A^{(i)}$ satisfies $\delta({A}^{\prime,(i)}) \geq \delta$ and $\delta({A}^{\prime,(i)}_{21}) > \delta$. 
Note the crucial role played by the definition of $\delta({A}^{\prime,(i)}_{22})$ as $d_R(E^{\prime,(i)}_{22})+1$ in this case.
Moreover, these inequalities still hold after right multiplication by a conjugate of $D_{22}(x)^\varphi$ by a permutation matrix. 
This is the analogue of \cite[Proposition 4.6]{LLLM}, where the notation $I^\varphi$ is defined.

Suppose next that $\delta(A^{(i)})$ is $\delta$ and that $\delta(A^{(i)}_{21}) > \delta$.
Then there exists an $x \in R[\![v]\!]$ with $d_R(x) \geq \delta - 1$ such that ${A}^{\prime,(i)} \defeq U_{12}(x)A^{(i)}$ satisfies $\delta({A}^{\prime,(i)}) \geq \delta$ and $\delta({A}^{\prime,(i)}_{11}),\, \delta({A}^{\prime,(i)}_{21}) > \delta$ (note that $\delta({A}^{\prime,(i)}_{21}) = \delta({A}^{(i)}_{21})$).
Again, we use that $\delta({A}^{\prime,(i)}_{12}) = d_R({E}^{\prime,(i)}_{12}) + 1$.
Moreover, these inequalities still hold after right multiplication by a conjugate of $U_{12}(x)^\varphi$ by a permutation matrix by the genericity assumption.

Suppose next that $\delta(A^{(i)})$ is $\delta$ and that $\delta(A^{(i)}_{11}),\, \delta(A^{(i)}_{21}) > \delta$.
Then there is an $x \in R[\![v]\!]$ with $d_R(x) \geq \delta - 1$ such that ${A}^{\prime,(i)} \defeq D_{11}(x)A^{(i)}$ satisfies $\delta({A}^{\prime,(i)}) \geq \delta$ and $\delta({A}^{\prime,(i)}_{11}),\, \delta({A}^{\prime,(i)}_{21}).\, \delta({A}^{\prime,(i)}_{12}) > \delta$ using that $A^{(i)}_{11} \in m_R \cdot R[\![v]\!]$. 
Moreover, these inequalities still hold after right multiplication by a conjugate of $D_{11}(x)^\varphi$ by a permutation matrix.

Suppose finally that $\delta(A^{(i)})$ is $\delta$ and that $\delta(A^{(i)}_{11}),\, \delta(A^{(i)}_{21}),\, \delta({A}^{(i)}_{12}) > \delta$.
Then there is an $x \in R[\![v]\!]$ with $d_R(x) \geq \delta - 1$ such that ${A}^{\prime,(i)} \defeq L_{21}(x)A^{(i)}$ satisfies $\delta({A}^{\prime,(i)}) \geq \delta+1$ using again that $A^{(i)}_{11} \in m_R \cdot R[\![v]\!]$. 
Moreover, these inequalities still hold after right multiplication by a conjugate of $L_{21}(x)^\varphi$ by a permutation matrix by the genericity assumption.
Repeating these four steps repeatedly gives the analogue of \cite[Proposition 4.13]{LLLM} in this case.

We deduce the forms of $A_i$ from the condition that $v+p$ must divide the determinant.
Finally, the analogue of \cite[Theorem 4.16]{LLLM} proves the uniqueness of $\beta$ up to scaling.
In the notation of \emph{loc.~cit.}, we obtain the equation 
\begin{equation}\label{eqn:changebasis}
\tld{A}_2^{(i)}+v^2\tld{A}_2^{(i)}M^{(i)} = \tld{A}_1^{(i)}+I^{(i+1)} \tld{A}_1^{(i)}
\end{equation}
(cf.~\cite[(4.2)]{LLLM}).
Suppose that $d_R(I^{(j)}) \geq \delta \geq 1$ for all $j$.
Then one can show that $d_R(I^{(j)}) \geq \delta+1$ for all $j$.
This implies that $I^{(j)} = 0$ for all $j$.
We again give more details in the case $\overline{A}^{(i)} = \overline{A}_2$ or $\overline{A}_3$.
The other cases are treated similarly.
Let $k$ be $1$ or $2$.
We first compare the $(k,1)$-entries of (\ref{eqn:changebasis}) to see that $d_R(I^{(i+1)}_{k2}) \geq \delta+1$.
Using this and the $(k,2)$-entries of (\ref{eqn:changebasis}), we see that $d_R(I^{(i+1)}_{k1}) \geq \delta+1$.
\end{proof}


For the rest of the section, let $\rhobar$ be as in \S \ref{subsec:phi}, and let $\cM$ be as in Proposition \ref{prop:rhophi} so that $\rhobar|_{G_{K_\infty}}$ is isomorphic to $\mathbb{V}^*(\cM)$.
Moreover, for simplicity, assume that $\rhobar$ is reducible.
Recall the definition of $S_\rhobar$ from \S \ref{subsec:phi}.

Let $s$ and $s'$ be in $W$ such that one of the following holds for each $i\in\Z/f$:
\begin{enumerate}
\item \label{item:Y} $s_i$ and $s_i'$ are both $\id$;
\item \label{item:XY} $s_i$ and $s_i'$ are both not $\id$; and
\item \label{item:X} $s_i$ is $\id$, but $s_i'$ is not, and $i\in S_\rhobar$.
\end{enumerate}
We say that $i\in \Z/f$ is case (\ref{item:Y}), (\ref{item:XY}), or (\ref{item:X}) if the above relevant condition holds.

\begin{prop}\label{prop:1defring}
Let $s$ and $s'$ be in $W$ as above.
Let $\tau$ be the tame generic inertial type with $\sigma(\tau) \cong R_s(\mu_\rhobar-s'\eta)$.
Let $R$ be the ring $\cO[\![ (X_i,Y_i)_{i=0}^{f-1}, X_\alpha,X_{\alpha'}]\!]/(h_i)$ where for each $i\in \Z/f$, $h_i$ is $Y_i$, $X_iY_i-p$, $Y_i-p$, or $X_i$ if $f-1-i$ is case $($\ref{item:Y}$)$, $($\ref{item:XY}$)$ with $\omega^{(f-i)} \in S_\rhobar$, $($\ref{item:XY}$)$ with $\omega^{(f-i)} \notin S_\rhobar$, or $($\ref{item:X}$)$, respectively.
Let $\cM_R = \prod_i R(\!(v)\!)\fE^i \oplus R(\!(v)\!)\fF^i$ be the $\varphi$-module defined by 
\begin{eqnarray*}
f-i \textrm{ is case } (\ref{item:Y}): & &
\begin{cases}
\varphi(\fE^{i-1}) & = v^{c_{f-i}-1}(v+p)\fE^i+(X_{i-1}+[a_{i-1}])v^{c_{f-i}}\fF^i \\
\varphi(\fF^{i-1}) & = v\fF^i
\end{cases} \\
f-i \textrm{ is case } (\ref{item:XY}), \omega^{(f-i)} \in S_\rhobar : & &
\begin{cases}
\varphi(\mathfrak{e}^{i-1}) & = v^{c_{f-i}}\mathfrak{e}^i + X_{i-1}v^{c_{f-i}}\fF^i \\
\varphi(\mathfrak{f}^{i-1}) & = -Y_{i-1}\fE^i + v \fF^i
\end{cases} \\
f-i \textrm{ is case } (\ref{item:XY}), \omega^{(f-i)} \notin S_\rhobar : & &
\begin{cases}
\varphi(\mathfrak{e}^{i-1}) & = v^{c_{f-i}}\mathfrak{e}^i + (X_{i-1}+[a_{i-1}])v^{c_{f-i}}\fF^i \\
\varphi(\mathfrak{f}^{i-1}) & = -p(X_{i-1}+[a_{i-1}])^{-1}\fE^i+v\mathfrak{f}^i
\end{cases} \\
f-i \textrm{ is case } (\ref{item:X}) : & &
\begin{cases}
\varphi(\mathfrak{e}^{i-1}) & = v^{c_{f-i}}\fE^i \\
\varphi(\mathfrak{f}^{i-1}) & = -Y_{i-1}\fE^i +  (v+p)\fF^i,
\end{cases} \\
\end{eqnarray*}
with the usual modification for $i=0$.
Then $\mathbb{V}^*(\cM_R)$ is the restriction to $G_{K_\infty}$ of a versal potentially Barsotti--Tate deformation of $\rhobar$ of type $\tau$.
\end{prop}
\begin{proof}
Define $w^*\in W$ and $s_\tau \in S_2$ to be the unique elements such that $w^*_{f-1} = \Id$ and $(w^*)^{-1} s \pi(w^*) = (s_\tau,\Id,\ldots, \Id)$.
Then the Deligne--Lusztig representations $R_s(\mu_\rhobar-s'\eta)$ and $R_{(s_\tau,\Id,\ldots, \Id)}((w^*)^{-1}(\mu_\rhobar-s'\eta))$ are isomorphic by \cite[Lemma 4.2]{Herzig}.
Moreover, (the quadratic base change of) $R_{(s_\tau,\Id,\ldots, \Id)}((w^*)^{-1}(\mu_\rhobar-s'\eta))$ is a generic principal series.
Define $w = (w_i)_i$ by $w_i = (w^*_{f-1-i})^{-1}$ for $i\in \Z/f$.
Then one easily checks that $w$ is an orientation for $(w^*)^{-1}(\mu_\rhobar-s'\eta)$.
Let $\fM_R$ be the Kisin module (with quadratic unramified descent) of tame inertial type (the quadratic unramified base change of) $\tau(s_\tau,-(w^*)^{-1}(\mu_\rhobar-s'\eta))$ with $A^{(i-1)} = \Mat_\beta(\phi^{(i-1)}_{\fM_R,w_i(2)})$ given by $A_1$, $A_2$, $A_3$, or $A_4$ if $f-i$ is case (\ref{item:Y}), $f-i$ is case (\ref{item:XY}) and $f-i \in S_\rhobar$, $f-i$ is case (\ref{item:XY}) and $f-i \notin S_\rhobar$, or $f-i$ is case (\ref{item:X}), respectively.
We claim that $T_{dd}^*(\fM_R\otimes_{\cO} \F)$ is isomorphic to the restriction to $G_{K_\infty}$ of $\rhobar$.
Assuming this, by Theorem \ref{thm:fhdef} and the analogue of \cite[\S 5.2 and \S 6]{LLLM}, $T_{dd}^*(\fM_R)$ is the restriction to $G_{K_\infty}$ of a versal potentially Barsotti--Tate deformation of $\rhobar$ of type $\tau$.

Let $L$ be $K((-p)^{\frac{1}{e}})$ with $e = q-1$ if $s_\tau = \id$ and $K_2((-p)^{\frac{1}{e}})$ with $e = q^2-1$ otherwise.
Let $\Delta$ be the Galois group $\Gal(L/K)$.
We claim that 
\[
(\fM_R \otimes_{\cO_{\cE,K}} \cO_{\cE,L})^\Delta \cong \cM_R.
\]
This would finish the proof including the claim in the previous paragraph since the restriction to $G_{K_\infty}$ of $\rhobar$ is isomorphic to $\cM$ by Proposition \ref{prop:rhophi}, and clearly $\cM_R \otimes_{\cO} \F$ is isomorphic to $\cM$.

Let $\mu_\rhobar$ be $(\mu_i)_i$.
Let $v^\lambda$ denote the torus element obtained by applying the coweight $\lambda$ to $v \defeq u^e$.
By \cite[Proposition 3.1.2]{LLLM2}, we see that a Kisin module (with quadratic unramified descent) of tame inertial type (the quadratic unramified base change of) $\tau$ with $\Mat_\beta(\phi^{(i)}_{\fM,w_{i+1}(2)})$ given by $A^{(i)}$ (resp. $A^{(i)}s_0^{-1} D(\alpha,\alpha')s_0$) for $i< f-1$ (resp. for $i=f-1$) gives a $\varphi$-module $\cM = \prod_i \F(\!(v)\!){\fE'}^i \oplus \F(\!(v)\!){\fF'}^i$ with $\varphi({\fE'}^{i-1},{\fF'}^{i-1}) = M_{i-1}'({\fE'}^i,{\fF'}^i)$ where 
\begin{align*}
M_i' &= w_{i+1} A^{(i)} v^{w_{i+1}^{-1}(w^*_{f-1-i})^{-1}(\mu_{f-1-i} - s'_{f-1-i}\eta)}(w_{i+1})^{-1} \\
&= (w^*_{f-2-i})^{-1}  A^{(i)} v^{w^*_{f-2-i}(w^*_{f-1-i})^{-1}(\mu_{f-1-i} - s'_{f-1-i}\eta)}w^*_{f-2-i} \\
&= (w^*_{f-2-i})^{-1}  A^{(i)} v^{s_{f-1-i}^{-1}(\mu_{f-1-i} - s'_{f-1-i}\eta)}w^*_{f-2-i}
\end{align*}
for $i< f-1$ and $M_{f-1}' = A^{(f-1)}s_0^{-1} D(\alpha,\alpha')s_0s_\tau^{-1}v^{(w^*_0)^{-1}(\mu_0 - s_0\eta)}$.
Changing to the bases $(\fE^i,\fF^i) = ({\fE'}^i,{\fF'}^i)(w^*_{f-2-i})^{-1}$, we see that $\cM$ is given by $(M_i)_i$ where 
\begin{align*}
M_i &= A^{(i)} v^{s_{f-1-i}^{-1}(\mu_{f-1-i} - s_{f-1-i}'\eta)}w_{f-2-i}^* (w_{f-1-i}^*)^{-1} \\
&= A^{(i)} v^{s_{f-1-i}^{-1}(\mu_{f-1-i} - s_{f-1-i}'\eta)}s_{f-1-i}^{-1} \\
&= A^{(i)} s_{f-1-i}^{-1} v^{\mu_{f-1-i} - s_{f-1-i}'\eta}
\end{align*}
for $i < f-1$ and 
\begin{align*}
M'_{f-1} &= A^{(f-1)}s_0^{-1} D(\alpha,\alpha')s_0s_\tau^{-1}v^{(w^*_0)^{-1}(\mu_0 - s_0'\eta)}(w^*_0)^{-1} \\
&= A^{(f-1)}s_0^{-1} D(\alpha,\alpha')s_0s_\tau^{-1}(w^*_0)^{-1} v^{\mu_0 - s_0'\eta} \\
&= A^{(f-1)}s_0^{-1} v^{\mu_0 - s_0'\eta}D(\alpha,\alpha').
\end{align*}
The proposition is now deduced by substituting for $A^{(i)}$, $s$, and $\mu_\rhobar$.
\end{proof}

If $\tau$ is an inertial type, let $R^\tau$ parameterize potentially Barsotti--Tate (framed) liftings of $\rhobar$ of type $\tau$.
If $T$ is a set of inertial types for $K$, then we let $\Spec R^T$ be the Zariski closure of $\cup_{\tau \in T} \Spec R^\tau[p^{-1}]$ in the universal (framed) lifting space $\Spec R_{\rhobar}^\square$ of $\rhobar$.

For applications to Shimura curves and algebraic modular forms on definite quaternion algebras, it is convenient to consider fixed determinant deformation rings.
If $\psi: G_K \ra \cO^\times$ is a continuous character, let $R^{\psi,\square}_\rhobar$ be the quotient of $R^\square_\rhobar$ parameterizing (framed) liftings of $\rhobar$ with determinant $\psi \varepsilon$.
Let $R^{\psi,\tau}$ be the simultaneous quotient of $R^{\psi,\square}_\rhobar$ and $R^\tau$ parameterizing potentially Barsotti--Tate (framed) liftings of $\rhobar$ of type $\tau$ and determinant $\psi \varepsilon$.
We can similarly define the quotient $R^{\psi,T}$ of $R^T$.
If $R^{\psi,\tau}$ is nonzero, then $R^\tau$ must be nonzero, $\psi$ must lift $\ovl{\varepsilon}^{-1} \det \rhobar$, and $\psi|_{I_K}$ must be $\det \tau$.
For all sets of types $T$ considered below, the determinants of all elements of $T$ coincide.

Now fix a Serre weight $\sigma$ in $W(\rhobar)$.
Suppose that $\sigma = \sigma_J$ for $J \subset S_\rhobar$ where $\sigma_J$ is defined with respect to $\mu_\rhobar$.
Let $I$ be a subset of $S$ such that $I \cap \{ \pm \omega^{(i)}\}$ has size at most one for all $i\in \Z/f$.
Let $T_{J,I}$ be the set of inertial types $\tau$ such that $\sigma(\tau)$ is of the form $R_s(\mu_\rhobar-s'\eta)$ where $s$ and $s'$ have the restrictions given by the following table.
\begin{center}
\begin{tabular}{ |c||c|c| } 
\hline
$s_i,\, s'_i$ & $i \notin J$ & $i\in J$ \\
\hline
$\{ \pm\omega^{(i)}\} \cap I = \emptyset$ & $s_i = s'_i$ & $s' \neq \id$ \\
\hline
$\omega^{(i)} \in I$ & $s_i = s'_i = \id$ & $s_i = s_i' \neq \id$ \\
\hline
$-\omega^{(i)} \in I$ & $s_i = s'_i \neq \id$ & $s_i = \id, \, s'_i \neq \id$ \\
\hline
\end{tabular}
\end{center}

\begin{lemma}\label{lemma:illset}
Define $w_J\in W$ by $w_{J,i-1} = \id$ if and only if $i\notin J$ for all $i\in \Z/f$. 
Then the set of tame inertial types $T_{J,I}$ corresponds by inertial local Langlands to the set $T_{\sigma,w_J(I)}$ of Deligne--Lusztig representations defined in \S \ref{sec:proj}.
\end{lemma}
\begin{proof}
This is a computation using the definitions and \cite[Theorem 5.2]{Herzig}.
Note that in the notation of \emph{loc.~cit.}, $\gamma'_{\sigma,\tau}$ in this case is equal to the Kronecker symbol for $\sigma$ and $\tau$.
Another method of proof is to use \cite[Proposition 2.10]{LMS} and verify that if $V_\phi(\tau) \cong R_s(\mu)$, then $W^?(\tau) = \JH(\ovl{R}_{sw_0}(\mu-sw_0\eta))$.
\end{proof}

\begin{thm}\label{thm:defring}
There is an isomorphism from $R^{T_{J,I}}$ to a formal power series ring over $\cO[\![ (X_i,Y_i)_{i=0}^{f-1}]\!] /(g_i(J,I))_i$, where $g_i(J,I)$ is given by the following table.
\begin{center}
\begin{tabular}{ |c||c|c|c| } 
\hline
$g_i(J,I)$ & $\omega^{(f-1-i)} \notin S_\rhobar$ & $\omega^{(f-1-i)}\in S_\rhobar \setminus J$ & $\omega^{(f-1-i)}\in J$ \\
\hline
$\{ \pm\omega^{(f-1-i)}\} \cap I = \emptyset$ & $Y_i(Y_i-p)$ & $Y_i(X_iY_i-p)$ & $X_i(X_iY_i-p)$ \\
\hline
$\omega^{(f-1-i)} \in I$ & $Y_i$ & $Y_i$ & $X_iY_i-p$\\
\hline
$-\omega^{(f-1-i)} \in I$ & $Y_i-p$ & $X_iY_i-p$ & $X_i$ \\
\hline
\end{tabular}
\end{center}
If $I \subset I'$, then $g_i(J,I')|g_i(J,I)$ for all $i\in \Z/f$ and $R^{T_{J,I'}}$ is the quotient of $R^{T_{J,I}}$ by the ideal $(g_i(J,I'))_i$.
Analogous results hold for $R^{\psi,T_{J,I}}$ provided that $\psi$ is chosen so that $R^{\psi,T_{J,I}}$ is nonzero for any, or equivalently all, choices of $I$ as above.
\end{thm}
\begin{rmk}
Since twisting by the universal unramified deformation of the trivial character gives an isomorphism $R^T \cong R^{\psi,T}[\![X]\!]$ (assuming $R^{\psi,T}$ is nonzero), the fixed determinant case follows from the first part of Theorem \ref{thm:defring}, and we ignore it below (cf.~\cite[Remark 7.2.2]{EGS}).
\end{rmk}
\begin{proof}
Since $R^{T_{J,I}}$ is naturally a quotient of $R_{\rhobar|_{G_{K_\infty}}}^\square$ by \cite[Lemma 7.4.3]{EGS},
it suffices to compute the Zariski closure of $\cup_{\tau \in T_{J,I}} \Spec R^\tau[p^{-1}]$ in $\Spec R_{\rhobar|_{G_{K_\infty}}}^\square$.
Let $R$ be the ring $\cO[\![ (X_i,Y_i)_{i=0}^{f-1}, X_\alpha,X_{\alpha'}]\!] /(g_i(J,\emptyset))_i$ and consider the deformation $\cM_R = \prod_i R(\!(v)\!)\fE^i \oplus R(\!(v)\!)\fF^i$ of $\cM$ defined by 
\begin{eqnarray*}
f-i \notin S_\rhobar : & &
\begin{cases}
\varphi(\fE^{i-1}) & = v^{c_{f-i}-1}(v+p-Y_{i-1})\fE^i+v^{c_{f-i}}(X_{i-1}+[a_{i-1}])\fF^i \\
\varphi(\fF^{i-1}) & = -Y_{i-1}(X_{i-1}+[a_{i-1}])^{-1}\fE^i+v\fF^i
\end{cases} \\
f-i \in S_\rhobar \setminus J : & &
\begin{cases}
\varphi(\mathfrak{e}^{i-1}) & = v^{c_{f-i}-1}(v+p-X_{i-1}Y_{i-1})\mathfrak{e}^i + X_{i-1}v^{c_{f-i}}\fF^i \\
\varphi(\mathfrak{f}^{i-1}) & = -Y_{i-1}\fE^i + v\mathfrak{f}^i
\end{cases} \\
f-i \in J : & &
\begin{cases}
\varphi(\mathfrak{e}^{i-1}) & = v^{c_{f-i}}\mathfrak{e}^i + X_{i-1}v^{c_{f-i}}\fF^i \\
\varphi(\mathfrak{f}^{i-1}) & = -Y_{i-1}\fE^i + (v+p-X_{i-1}Y_{i-1})\mathfrak{f}^i
\end{cases} \\
\end{eqnarray*}
with the usual modification at $i=0$.
Define the deformation functor $D^\square$ by $D^\square(A) = \{(\psi:R\ra A,b_A)\}/\isom$ for $A$ a complete local Noetherian $\cO$-algebra, where $b_A$ is a basis for the free rank two $A$-module $\mathbb{V}^*(\psi^*(\cM_R))$ whose reduction modulo $\fm_A$ gives $\rhobar$.
Then the natural map $D^\square \ra \Spf R$ is a $\widehat{\GL}_2$-torsor and is thus formally smooth of dimension $4$.
Let $D^\square$ be $\Spf R^\square$.
One can rescale $\fE^0$ and $\fF^0$ by units, and rescale the other basis vectors appropriately so that the coefficients in the definition of $\varphi$ which are $1$ remain $1$.
This gives a $\widehat{\mathbf{G}}_m^2$-action on $R$, and orbits give isomorphic $\varphi$-modules.
We claim that the natural map $\Spf R^\square/\widehat{\mathbf{G}}_m^2 \ra \Spf R_{\rhobar|_{G_{K_\infty}}}^\square$ is a closed embedding.
It suffices to show injectivity on reduced tangent spaces.

Suppose that $t$ is a reduced tangent vector of $\Spf R^\square/\widehat{\mathbf{G}}_m^2$ which maps to zero in $\Spf R_{\rhobar|_{G_{K_\infty}}}^\square$.
By formal smoothness, we can extend this to a map $t: R^\square \ra \F[\varepsilon]/(\varepsilon^2)$.
Let $\cM_t$ be $\cM_R \otimes_{R,t} \F[\varepsilon]/(\varepsilon^2)$ so that $\cM_t$ and $\cM \otimes_{\F} \F[\varepsilon]/(\varepsilon^2)$ are isomorphic.
Let $M_i$ (resp.~ $M_{t,i}$) be the matrices such that $\varphi(\fE^i\otimes_R \F,\fF^i\otimes_R \F) = M_i(\fE^{i+1}\otimes_R \F,\fF^{i+1}\otimes_R \F)$ (resp.~ $\varphi(\fE^i\otimes_R \F[\varepsilon]/(\varepsilon^2),\fF^i\otimes_R \F[\varepsilon]/(\varepsilon^2)) = M_{t,i}(\fE^{i+1}\otimes_R \F[\varepsilon]/(\varepsilon^2),\fF^{i+1}\otimes_R \F[\varepsilon]/(\varepsilon^2))$).
Then there are matrices $D_i \in \GL_2(\F(\!(v)\!) )$ such that 
\[(\Id_3+\varepsilon D_i)M_i \varphi(\Id_3-\varepsilon D_{i-1}) = M_{t,i}\]
for all $i \in \Z/f$, where $\id_3$ is the $3 \times 3$ identity matrix (we can assume without loss of generality that the terms without $\varepsilon$ are $\Id_3$ by multiplying by their inverses).
We first claim that $D_i\in \GL_2(\F[\![v]\!])$ for all $i \in \Z/f$.
For each $i$, let $k_i \in \Z$ be the minimal integer such that $v^{k_i}D_i \in \Mat_3(\F[\![v]\!])$.
Then $v^{c_{f-1-i}+k_i}\varphi(\Id_3-\varepsilon D_{i-1}) = v^{c_{f-1-i}+k_i}M_i^{-1} (\Id_3-\varepsilon D_i) M_{t,i}\in \Mat_3(\F[\![v]\!])$, and thus $c_{f-1-i}+k_i\geq pk_{i-1}$.
Since $c_{f-1-i} < p-1$, $k_i\geq 2 + p(k_{i-1}-1)$.
If $k_{i-1} \geq n\geq 1$, then $k_i\geq n+1$, from which we derive the contradiction that $k_i\geq n$ for every $n\in \N$.
Hence $k_i \leq 0$ for all $i$.

We next claim that if $f-1-i\notin S_\rhobar$ for some $i\in \Z/f$, then $t(Y_i) = 0$.
Suppose for the sake of contradiction that $f-1-i\notin S_\rhobar$ and $t(Y_i) \neq 0$.
Let $N_i \in \Mat_3(\F[\![v]\!])$ be such that $\varepsilon N_i = M_{t,i} - M_i$.
Then by the formulas for $M_i$ and $M_{t,i}$, the first (resp.~ second) entry in the top row of $N_i$ is exactly divisible by $v^{c_{f-1-i}-1}$ (resp.~ $v^0$).
On the other hand, since $D_i M_i - M_i \varphi(D_{i-1}) = N_i$, the first (resp.~ second) entry in the top row of $N_i$ is divisible by $v^{c_{f-1-i}}$ (resp.~ $v$), which is a contradiction.
Thus $t$ is a reduced tangent vector of 
\[(\Spf R^\square/(Y_i:\, f-1-i\notin S_\rhobar))/\widehat{\mathbf{G}}_m^2.\]

Let $\tau$ be the tame intertial type such that $\sigma(\tau) = R_{w_0}(\mu-w_0\eta)$.
Then the natural map from the quotient of 
\begin{equation}\label{eqn:Rsquare}
\Spf R^\square/(\varpi,\{Y_i:\, f-1-i \notin S_\rhobar\},\, \{X_iY_i: f-1-i\in S_\rhobar\})
\end{equation}
by $\widehat{\mathbf{G}}_m^2$ to $\Spf R^\tau/\varpi$ is formally smooth by Proposition \ref{prop:1defring}.
In fact, it is an isomorphism since the domain and codomain are both of dimension $f+4$ over $\F$.
Indeed, for the codomain this follows from \cite[Theorem 3.3.4]{kisin} and $p$-flatness, while for the domain we see directly that (\ref{eqn:Rsquare}) has dimension $f+6$.
Since the map 
\begin{align*}
\Spf R^\square/(\varpi,\{Y_i:\, f-1-i \notin S_\rhobar\},\{X_iY_i: f-1-i\in S_\rhobar\}) \\
\ra \Spf R^\square/(\varpi,\, \{Y_i:\, f-1-i\notin S_\rhobar\})
\end{align*}
is an isomorphism on reduced tangent spaces, $t$ is a reduced tangent vector of $\Spf R^\tau$.
Since $\Spf R^\tau \ra \Spf R_{\rhobar|_{G_{K_\infty}}}^\square$ is injective on reduced tangent spaces again by \cite[Lemma 7.4.3]{EGS}, $t$ is zero.

Finally, since $R$ is $p$-flat, it suffices to show that if $\# (\{\pm\omega_i\} \cap I) = 1$ for all $i\in \Z/f$, then $\mathbb{V}^*(\cM/(g_i(J,I))_i)$ is the restriction to $G_{K_\infty}$ of a versal potentially Barsotti--Tate deformation of $\rhobar$ of the unique type $\tau$ in $T_{J,I}$.
This follows from Proposition \ref{prop:1defring}.
\end{proof}

\section{Patching functors and multiplicity one}

Let $\rhobar: G_K \ra \GL_2(\F)$ be a continuous Galois representation.
Again, $\rhobar$ is either an extension of
\[
\nr_{\alpha'} \omega_f^{\sum_{i=0}^{f-1} \mu_{2,i}p^i} \textrm{ by } \nr_\alpha \omega_f^{\sum_{i=0}^{f-1} \mu_{1,i}p^i}
\]
or is
\[
\nr_\alpha \Ind_{G_{K_2}}^{G_K} \omega_{2f}^{\sum_{i=0}^{f-1} \mu_{1,i}p^i+p^f\sum_{i=0}^{f-1} \mu_{2,i}p^i}
\]
for some dominant $p$-restricted character $\mu_\rhobar = (\mu_{1,i},\mu_{2,i})_i \in X^*(T)$ and some $\alpha$ and $\alpha' \in \F^\times$.

\begin{defn} \label{def:gen}
We say that a dominant $p$-restricted $\mu \in X^*(T)$ is generic if $2 < \langle \mu, \beta \rangle < p-3$. 
We say that $\rhobar$ is generic if $\mu_\rhobar$ is generic or if $\rhobar$ is semisimple and $1$-generic in the sense of \cite[Definition 4.1]{LMS}.
\end{defn}

Note that if $\rhobar$ is generic, then $\rhobar$ is generic in the sense of \cite[Definition 11.7]{BP} and \cite[Definition 2.1.1]{EGS}.
We now assume that $\rhobar$ is not semisimple and is generic.
Then a twist of $\rhobar$ is of the form in \S \ref{subsec:phi}.

We now fix a Serre weight $\sigma \in W(\rhobar)$ ($W(\rhobar)$ is recalled in \S \ref{subsec:phi}).
Let $\mu\in X^*(T)$ be such that $\sigma \cong F(\mu-\eta)$.
If $\sigma$ is $\sigma_{J(\sigma)}$ with respect to $\mu_\rhobar$, define $w_{J(\sigma)}\in W$ by $w_{J(\sigma),i-1} = \id$ if and only if $i\notin J(\sigma)$ for all $i\in \Z/f$ as in Lemma \ref{lemma:illset}.
Then we set $S_\rhobar^\sigma$ to be $w(S_\rhobar)$ with $w = w_{J(\sigma)}^{-1} \pi(w_{J(\sigma)})$.

\begin{lemma}
The set $W(\rhobar)$ is $\{ \sigma_J|J\subset S_\rhobar^\sigma\}$ where $\sigma_J$ is defined in terms of $\mu$.
\end{lemma}
\begin{proof}
This follows from Proposition \ref{prop:serrewts} and \cite[Proposition 2.4]{LMS}.
\end{proof}

Let $\psi:G_K \ra \cO^\times$ be an unramified twist of $\omega_f^{\sum_{i\in \Z/f} (\mu_{1,i}+\mu_{2,i}-1)}$ lifting $\ovl{\varepsilon}^{-1}\det \rhobar$.
Suppose that $M_\infty(\cdot)$ is a minimal fixed determinant patching functor over $\cO$ for $\rhobar^\vee$ with fixed determinant $\psi^\vee$ (see \cite[Definition 6.1.3]{EGS}).
(Note that $\mathcal{D}(\rhobar^\vee)$ in the conventions of \cite[\S 2]{EGS} is $W(\rhobar)$ in ours.)
Using contragredients, we identify $R_{\rhobar^\vee}^\square$ with $R_\rhobar^\square$.
This identifies $R^\tau$ with the (framed) lifting ring of $\rhobar^\vee$ parameterizing lifts $\rho^\vee$ of type $\tau^\vee$ with $\HT_\kappa(\rho^\vee) = \{-1,0\}$ for all $\kappa: E\into \C_p$.
Note that such lifts of $\rhobar^\vee$ are called potentially Barsotti--Tate in \cite[\S 7]{EGS}.
Similar identifications are made for multitype (fixed determinant) potentially Barsotti--Tate deformation rings.
For an $\cO_K[\GL_2(\cO_K)]$-module $N$, we will denote $M_\infty(N\otimes_{\cO_K} \cO)$ by $M'_\infty(N)$, where tensor product is over the map $\cO_K\into \cO$ in \S \ref{subsec:not}.

\begin{lemma}\label{lemma:patch2}
The $R_\infty$-module $M'_\infty(R_{\mu}/\Fil^2_\otimes R_{\mu})$ is cyclic.
\end{lemma}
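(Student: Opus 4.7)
The strategy will be Nakayama's lemma: since $M'_\infty(R_\mu/\Fil^2 R_\mu)$ is a finitely generated $R_\infty$-module, cyclicity is equivalent to the $\F$-vector space $M'_\infty(R_\mu/\Fil^2 R_\mu) \otimes_{R_\infty} \F$ being at most one-dimensional. Because $R_\mu$ is the projective envelope of $\sigma$, the $\GL_2(\F_q)$-cosocle of $R_\mu/\Fil^2 R_\mu$ is $\sigma$, and $M'_\infty(\sigma)$ is itself cyclic (nonzero and cyclic since $\sigma \in W(\rhobar)$, by the Diamond--Fujiwara trick). Applying the exact functor $M'_\infty$ to the short exact sequence
\[0 \to \Fil^1 R_\mu/\Fil^2 R_\mu \to R_\mu/\Fil^2 R_\mu \to \sigma \to 0\]
thus reduces the lemma to showing that the image of $M'_\infty(\Fil^1 R_\mu/\Fil^2 R_\mu)$ in $M'_\infty(R_\mu/\Fil^2 R_\mu)$ is contained in $\mathfrak{m}_{R_\infty} \cdot M'_\infty(R_\mu/\Fil^2 R_\mu)$.

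By Proposition \ref{prop:Wk}, $\Fil^1 R_\mu/\Fil^2 R_\mu$ splits as $\bigoplus_{|J|=1} \sigma_J$; for each singleton $J \subset S$, projecting onto the length-two subquotient $E_J \subset R_\mu/\Fil^2 R_\mu$ realizing the unique non-split extension of $\sigma$ by $\sigma_J$ (Proposition \ref{prop:ext}) splits off the other summands. It therefore suffices to prove $M'_\infty(\sigma_J) \subset \mathfrak{m}_{R_\infty} M'_\infty(E_J)$ for each such $J$. For this I will lift $E_J$ to characteristic zero: choose $I \subset S$ with $|I \cap \{\pm\omega^{(i)}\}| \leq 1$ for all $i$ and $J \cap I = \emptyset$, and work inside the integral quotient $\tld{R}_{\mu, I}$, which reduces modulo $p$ to $R_{\mu, I}$ by Proposition \ref{prop:projred}. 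Combining Proposition \ref{prop:exseq} with the multitype deformation ring computation of Theorem \ref{thm:defring}, one extracts the key transversality statement: lifts of $\sigma$ and $\sigma_J$ inside $\tld{R}_{\mu, I}$ are related by a factor of $p$, reflecting the fact that two lattices in potentially Barsotti--Tate lifts of distinct generic tame types can agree mod $p$ but never mod $p^2$. Applying the exact patching functor $M_\infty$ then translates this $p$-divisibility into the desired containment $M'_\infty(\sigma_J) \subset p \cdot M'_\infty(E_J) \subset \mathfrak{m}_{R_\infty} M'_\infty(E_J)$.

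The main obstacle will be the precise identification of the integral subobject $\tld{E}_J$ lifting $E_J$ inside $\tld{R}_{\mu,I}$, and the verification that the scheme-theoretic transversality of Theorem \ref{thm:defring} actually produces the required $p$-divisibility at the level of patched modules. This is exactly the integral intersection theory method singled out in the introduction as the main new ingredient for handling the wildly ramified case; however, for the small quotient $R_\mu/\Fil^2 R_\mu$ considered here the structure is clean enough that, once Theorem \ref{thm:defring} and Proposition \ref{prop:exseq} are in hand, the argument should be essentially formal.
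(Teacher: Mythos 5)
Your overall Nakayama reduction (to showing $M'_\infty(\Fil^1 R_\mu/\Fil^2 R_\mu)$ lands in $\fm_{R_\infty}M'_\infty(R_\mu/\Fil^2 R_\mu)$, equivalently that each $M'_\infty(E_J)$ is cyclic) is sound in principle, but it goes down a much more complicated road than the paper, and the crux step is not actually carried out. The paper's proof is a two-line observation: take $\tau$ with $\sigma(\tau)=R_w(\mu-w\eta)$ and the lattice $\sigma^\circ(\tau)$ with cosocle $\sigma$; the natural surjection $R_\mu/\Fil^2 R_\mu \surj \overline{\sigma}^\circ(\tau)/\rad^2\overline{\sigma}^\circ(\tau)$ has kernel with no Jordan--H\"older factor in $W(\rhobar)$ (by the description of $\rad^\bullet\overline{\sigma}^\circ(\tau)$ in \cite[Theorem 5.1.1]{EGS} and \cite[Proposition 3.2]{LMS}), hence induces an isomorphism on $M'_\infty$; and $M'_\infty(\overline{\sigma}^\circ(\tau))$ is already known to be cyclic by \cite[Theorem 10.1.1]{EGS}, so its quotient is too. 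No intersection theory and no integral lifting are needed.

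The gap in your version is precisely the step you flag yourself: you never construct the integral object $\tld{E}_J$ nor verify the asserted $p$-divisibility $M'_\infty(\sigma_J)\subset p\,M'_\infty(\tld E_J)$. The transversality principle you invoke (Proposition~\ref{prop:exseq} and Theorem~\ref{thm:defring}) governs how \emph{two different} quotients $\tld{R}_{\mu,I\cup\{\omega^{(j)}\}}$ and $\tld{R}_{\mu,I\cup\{-\omega^{(j)}\}}$ meet inside $\tld{R}_{\mu,I}$; it is not directly a statement about a single length-two extension $E_J$, and in the relevant case $\sigma_J\in W(\rhobar)$ both $\sigma$ and $\sigma_J$ already lie in $\JH(\overline{\sigma}(\tau))$ for a \emph{single} type $\tau$, so no ``two-type'' transversality is in play. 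What you would in effect need is exactly the structure of $\sigma^\circ(\tau)$ (that $\sigma_J$ sits in $\rad\overline{\sigma}^\circ(\tau)$ with the right multiplicity) — at which point you may as well cite \cite[Theorem 10.1.1]{EGS} directly and skip the Nakayama/lifting apparatus entirely. In short: the decomposition you set up is fine, the intersection-theory step is neither justified nor necessary here, and the intended application of that machinery in the paper is the later inductive passage from the quotients $\tld{R}_{\mu,I}$ to $\tld{R}_\mu$ (Theorem~\ref{thm:multone}), not this base-case lemma.
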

\begin{proof}
Let $\tau$ be the tame type such that $\sigma(\tau) = R_w(\mu-w\eta)$.
Then $W(\rhobar)$ is exactly $\JH(\ovl{\sigma}(\tau))$.
Let $\sigma^\circ(\tau) \subset \sigma(\tau)$ be the unique lattice up to homothety with cosocle isomorphic to $\sigma$ (see \cite[Lemma 4.1.1]{EGS}).
Let $\overline{\sigma}^\circ(\tau)$ be the reduction of $\sigma^\circ(\tau)$.
Then the natural map $R_\mu \surj \overline{\sigma}^\circ(\tau)$ induces a map 
\begin{equation}\label{eqn:fil2}
R_\mu/\Fil^2_\otimes R_\mu \surj \overline{\sigma}^\circ(\tau)/\rad^2 \overline{\sigma}^\circ(\tau).
\end{equation}
By \cite[Proposition 3.2]{LMS}, the Jordan--H\"older factors of $R_\mu/\Fil^2_\otimes R_\mu$ appear without multiplicity.
Moreover, those Jordan--H\"older factors which are also in $W(\rhobar)$ are in $\JH(\overline{\sigma}^\circ(\tau)/\rad^2 \overline{\sigma}^\circ(\tau))$ by \cite[Theorem 5.1.1]{EGS} (these are exactly the Serre weights $\sigma_J$ with respect to $\mu$  with $J \subset S_\rhobar^\sigma$ and $\# J = 1$.).
Thus the kernel of the map (\ref{eqn:fil2}) contains no Jordan--H\"older factors in $W(\rhobar)$.
We then see that the induced map 
\[
M'_\infty(R_\mu/\Fil^2_\otimes R_\mu) \surj M'_\infty(\overline{\sigma}^\circ(\tau)/\rad^2 \overline{\sigma}^\circ(\tau))
\]
is an isomorphism.
As $M'_\infty(\overline{\sigma}^\circ(\tau))$ is a cyclic $R_\infty$-module by \cite[Theorem 10.1.1]{EGS}, so is $M'_\infty(\overline{\sigma}^\circ(\tau)/\rad^2 \overline{\sigma}^\circ(\tau))$.
\end{proof}

\begin{lemma}\label{lemma:covering}
Suppose that $I \subset S$ such that 
\[
\#(I \cap \{\pm \omega^{(i)}\}) + \#(S_\rhobar^\sigma \cap \{\pm \omega^{(i)}\}) = 1
\]
for all $i$.
Let $N$ be a submodule of $\Fil^k_\otimes R_{\mu,I}/\Fil^{k+2}_\otimes R_{\mu,I}$, and let $\ovl{N}$ be its image in $\gr^k_\otimes R_{\mu,I}$.
If $\gr^k_\otimes R_{\mu,I}/\ovl{N}$ contains no Serre weights in $W(\rhobar)$, then \[(\Fil^k_\otimes R_{\mu,I}/\Fil^{k+2}_\otimes R_{\mu,I})/N\] contains no Jordan--H\"older factors in $W(\rhobar)$.
\end{lemma}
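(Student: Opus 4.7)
The plan is to split $Q := (\Fil^k R_{\mu,I}/\Fil^{k+2} R_{\mu,I})/N$ into its two graded pieces. Writing $\pi$ for the projection $\Fil^k R_{\mu,I}/\Fil^{k+2} R_{\mu,I} \twoheadrightarrow \gr^k R_{\mu,I}$ and $N' := N \cap \gr^{k+1} R_{\mu,I}$, there is a short exact sequence
\[
0 \to \gr^{k+1} R_{\mu,I}/N' \to Q \to \gr^k R_{\mu,I}/\pi(N) \to 0.
\]
The hypothesis immediately handles the right-hand term, so it will suffice to show that $\gr^{k+1} R_{\mu,I}/N'$ has no Jordan--H\"older factor in $W(\rhobar)$.

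Suppose for contradiction that $\sigma_{J'} \in W(\rhobar)$ is such a factor. By Proposition \ref{prop:Wk}, $\sigma_{J'}$ appears with multiplicity one as a summand of the semisimple module $\gr^{k+1} R_{\mu,I}$, with $\bk(J') = k+1$ and $J' \cap I = \emptyset$; moreover $J' \subset I'(\rhobar,\mu)$ since $\sigma_{J'} \in W(\rhobar)$. Because $\bk(J') \geq 1$, one can pick any $\omega \in J'$ and set $J := J' \setminus \{\omega\}$, so that $\bk(J) = k$, $J \cap I = \emptyset$, and $\sigma_J \in W(\rhobar)$. The hypothesis on the cokernel of $N \to \gr^k R_{\mu,I}$ then forces $\sigma_J \subset \pi(N)$.

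Let $P \subset \Fil^k R_{\mu,I}/\Fil^{k+2} R_{\mu,I}$ denote the preimage of the $\sigma_J$-summand of $\gr^k R_{\mu,I}$, and let $E$ be the quotient of $P$ by the sum of all $\sigma_{J''}$-summands of $\gr^{k+1} R_{\mu,I}$ with $J'' \neq J'$. Then $E$ is realized as a subquotient of $W_{k,k+1,I}$ and is an extension of $\sigma_J$ by $\sigma_{J'}$; by Proposition \ref{prop:ext} together with the multiplicity-free structure of $W_{k,k+1,I}$, this extension must be nonsplit. Since $P \cap N$ surjects onto $\sigma_J$, nonsplitness of $E$ forces the image of $P \cap N$ in $E$ to be all of $E$, so in particular $P \cap N$ hits the socle $\sigma_{J'} \subset E$ nontrivially. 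The resulting element lies in $\gr^{k+1} R_{\mu,I} \cap N = N'$ and projects nontrivially to the $\sigma_{J'}$-summand of $\gr^{k+1} R_{\mu,I}$; by multiplicity-freeness of $\gr^{k+1} R_{\mu,I}$ this forces $\sigma_{J'} \subset N'$, contradicting our assumption.

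The main subtlety will be pinning down that the specific extension $E$ built from $P$ coincides with the nontrivial extension whose existence is guaranteed by Proposition \ref{prop:ext}; this relies on combining the uniqueness up to isomorphism of the nontrivial extension with the multiplicity-free structure of $W_{k,k+1,I}$, which together rule out any competing split subquotient extension of $\sigma_J$ by $\sigma_{J'}$.
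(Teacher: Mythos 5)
Your proof is correct and takes essentially the same approach as the paper: reduce to the top graded piece, pick $J\subset J'$ with $\#(J'\setminus J)=1$ so that $\sigma_J\in W(\rhobar)$ lies in the image of $N$, and use the nonsplit extension of $\sigma_J$ by $\sigma_{J'}$ from Proposition \ref{prop:ext} to force $\sigma_{J'}$ into $N\cap\gr^{k+1}R_{\mu,I}$. Two small differences worth noting, both in your favor: you observe that under the lemma's hypotheses any $\omega\in J'$ works (since $\sigma_{J'}\in W(\rhobar)$ forces $\#(J'\cap\{\pm\omega^{(i)}\})\leq 1$, so the paper's separate case $k'_j=2$ is vacuous), and you carry out the argument in the full preimage $P$ of the $\sigma_J$-summand inside $\Fil^k R_{\mu,I}/\Fil^{k+2}R_{\mu,I}$ rather than inside $N\cap W_{\bk,\bk+1,I}$ as the paper's terse wording suggests, which sidesteps the question of whether an element of $N$ lifting a generator of $\sigma_J$ actually lies in $W_{\bk,\bk+1,I}$.
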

\begin{proof}
It suffices to show that $\gr^{k+1}_\otimes R_{\mu,I}/\gr^{k+1}_\otimes N$ contains no Jordan--H\"older factors in $W(\rhobar)$, since by assumption $\gr^k_\otimes R_{\mu,I}/\gr^k_\otimes N$ contains no Jordan--H\"older factors in $W(\rhobar)$.
In fact, it suffices to show that $\gr^{k+1}_\otimes W_{\bk,\bk+1,I}/(N \cap \gr^{k+1}_\otimes W_{\bk,\bk+1,I})$ contains no Jordan--H\"older factors in $W(\rhobar)$ since $\sum_{|\bk | = k} \gr^{k+1}_\otimes W_{\bk,\bk+1,I} = \gr^{k+1}_\otimes R_{\mu,I}$.

By Proposition \ref{prop:Wk}, a Jordan--H\"older factor of $\gr^{k+1}_\otimes W_{\bk,\bk+1,I}$ has the form $\sigma_{J'}$ with respect to $\mu$ where $J' \cap I = \emptyset$ and there is a $j\in \Z/f$ such that if $\bk(J') = \bk'$ then $k'_i = k_i$ for all $i\neq j$ and $k'_j = k_j+1$.
Suppose that $\sigma_{J'}\in W(\rhobar)$.
If $k'_j = 2$, then let $J = J' \setminus \{-w_j\omega^{(j)}\}$ (with $w$ defined in the beginning of the section).
Otherwise, $J' \cap \{\pm \omega^{(j)}\} = \{w_j\omega^{(j)}\}$ since we assumed that $\sigma_{J'} \in W(\rhobar)$.
In this case, let $J = J' \setminus \{w_j\omega^{(j)}\}$.
Then $\sigma_J \in W(\rhobar)$ and is thus a Jordan--H\"older factor of $N \cap W_{\bk,\bk+1,I}$.
By Proposition \ref{prop:ext}, $\sigma_{J'}$ is a Jordan--H\"older factor of $N$.
\end{proof}

The following lemma generalizes \cite[Lemma 10.1.13]{EGS}, one of the methods used to compute patched modules.

\begin{lemma}\label{lemma:ca}
Let $R$ be a local ring, and $M'' \subset M' \subset M$ be $R$-modules such that $M'/M''$ and $M'$ are minimally generated by the same finite number of elements.
Then $M'' \subset \mathfrak{m}M$.
If, moreover, $M$ is finitely generated over $R$, then $M/M''$ and $M$ are minimally generated by the same number of elements.
\end{lemma}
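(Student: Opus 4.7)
The plan is to deduce both assertions from Nakayama's lemma applied to suitable reductions modulo the maximal ideal $\mathfrak{m}$. The key observation is that a surjection of finitely generated modules over a local ring is an isomorphism as soon as it induces an isomorphism on the minimal number of generators, equivalently, an isomorphism after tensoring with $R/\mathfrak{m}$.

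For the first assertion, I would consider the canonical surjection $M' \twoheadrightarrow M'/M''$ and reduce it modulo $\mathfrak{m}$ to get a surjection of $R/\mathfrak{m}$-vector spaces
\[
M'/\mathfrak{m}M' \twoheadrightarrow (M'/M'')/\mathfrak{m}(M'/M'').
\]
By the assumption and Nakayama's lemma, both sides have the same finite dimension $n$ over $R/\mathfrak{m}$, so the surjection is an isomorphism. The kernel of the reduction being zero means $M'' + \mathfrak{m}M' = \mathfrak{m}M'$, i.e., $M'' \subset \mathfrak{m}M' \subset \mathfrak{m}M$.

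For the second assertion, I would use that $M'' \subset \mathfrak{m}M$ (just established) to compute
\[
(M/M'')/\mathfrak{m}(M/M'') = M/(M'' + \mathfrak{m}M) = M/\mathfrak{m}M.
\]
Since $M$ is finitely generated, both $M$ and $M/M''$ are finitely generated, so Nakayama's lemma implies that the minimal number of generators of each equals the $R/\mathfrak{m}$-dimension of its reduction modulo $\mathfrak{m}$, and these two dimensions agree by the above identification.

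There is no real obstacle here; the statement is essentially a clean packaging of Nakayama's lemma. The only subtlety worth flagging is that the first part does not require $M$ itself to be finitely generated (only $M'$ is, implicitly, via the hypothesis that it is minimally generated by finitely many elements), whereas the finite generation of $M$ is genuinely needed in the second part in order to apply Nakayama to $M$ and to $M/M''$.
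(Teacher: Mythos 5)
Your proof is correct and takes essentially the same route as the paper: both parts are deduced by applying Nakayama's lemma to reductions modulo $\mathfrak{m}$, first to get $M'' \subset \mathfrak{m}M' \subset \mathfrak{m}M$ from the surjection $M' \twoheadrightarrow M'/M''$, and then to compare $M$ with $M/M''$. You have simply written out the vector-space dimension count that the paper leaves implicit.
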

\begin{proof}
By Nakayama's lemma, that $M'/M''$ and $M'$ are minimally generated by the same finite number of elements implies that $M'' \subset \mathfrak{m}M'$ and thus $M'' \subset \mathfrak{m}M$.
If $M$ is finitely generated, then another application of Nakayama's lemma implies that $M/M''$ and $M$ are minimally generated by the same number of elements.
\end{proof}

The following proposition generalizes the results and methods of \cite{HW,LMS} by combining Lemmas \ref{lemma:patch2}, \ref{lemma:covering}, and \ref{lemma:ca}.

\begin{prop}\label{prop:oldcyc}
Suppose that $I \subset S$ such that $\#(I \cap \{\pm \omega^{(i)}\}) + \#(S_\rhobar^\sigma \cap \{\pm \omega^{(i)}\}) = 1$.
Then $M'_\infty(\tld{R}_{\mu,I})$ is a cyclic $R_\infty$-module.
\end{prop}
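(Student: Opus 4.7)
The plan is to first reduce to showing $M'_\infty(R_{\mu,I})$ is cyclic, and then prove this cyclicity by induction on the depth of the filtration $\Fil^\bullet$. Since $\tld{R}_{\mu,I}$ is $\cO$-flat with mod-$\varpi$ reduction isomorphic to $R_{\mu,I}$ by Proposition \ref{prop:projred}, applying the exact patching functor $M_\infty$ to the short exact sequence
\begin{equation*}
0 \to \tld{R}_{\mu,I} \xrightarrow{\varpi} \tld{R}_{\mu,I} \to R_{\mu,I} \to 0
\end{equation*}
yields $M'_\infty(\tld{R}_{\mu,I})/\varpi M'_\infty(\tld{R}_{\mu,I}) \cong M'_\infty(R_{\mu,I})$. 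By Nakayama's lemma for the local ring $R_\infty$, cyclicity of the integral patched module reduces to cyclicity of its reduction mod $\varpi$.

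To show $M'_\infty(R_{\mu,I})$ is cyclic, I would induct on $k \geq 0$ to establish that $M'_\infty(R_{\mu,I}/\Fil^{k+2} R_{\mu,I})$ is cyclic. The base case $k=0$ is immediate from Lemma \ref{lemma:patch2}: since the surjection $R_\mu \surj R_{\mu,I}$ respects the filtration $\Fil^\bullet$, the quotient $R_{\mu,I}/\Fil^2 R_{\mu,I}$ is itself a quotient of $R_\mu/\Fil^2 R_\mu$, and $M'_\infty$ carries surjections to surjections.

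For the inductive step, apply $M'_\infty$ to the short exact sequence
\begin{equation*}
0 \to \gr^{k+1} R_{\mu,I} \to R_{\mu,I}/\Fil^{k+2} R_{\mu,I} \to R_{\mu,I}/\Fil^{k+1} R_{\mu,I} \to 0.
\end{equation*}
Since the rightmost patched module is cyclic by induction, Nakayama's lemma reduces the inductive step to showing that the image of $M'_\infty(\gr^{k+1} R_{\mu,I})$ in $M'_\infty(R_{\mu,I}/\Fil^{k+2} R_{\mu,I})$ lies in $\fm \cdot M'_\infty(R_{\mu,I}/\Fil^{k+2} R_{\mu,I})$. Following the template of \cite{LMS,HW}, I would invoke Lemma \ref{lemma:ca} with $M = M'_\infty(R_{\mu,I}/\Fil^{k+2} R_{\mu,I})$, $M' = M'_\infty(\Fil^k R_{\mu,I}/\Fil^{k+2} R_{\mu,I})$, and $M''$ the image under $M'_\infty$ of a carefully chosen submodule $N \subset \Fil^k R_{\mu,I}/\Fil^{k+2} R_{\mu,I}$ whose projection to $\gr^k R_{\mu,I}$ surjects onto the sum of Jordan--H\"older factors in $W(\rhobar)$. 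By Lemma \ref{lemma:covering}, the quotient $(\Fil^k R_{\mu,I}/\Fil^{k+2} R_{\mu,I})/N$ has no Jordan--H\"older factors in $W(\rhobar)$; since a minimal patching functor is supported on $W(\rhobar)$, $M'_\infty$ of this quotient vanishes, yielding the equality of minimal numbers of generators required in Lemma \ref{lemma:ca}.

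The main obstacle will be constructing $N$ so that every Jordan--H\"older factor $\sigma_{J'}$ of $\gr^{k+1} R_{\mu,I}$ lying in $W(\rhobar)$ is reached via a nontrivial extension by a weight $\sigma_J$ of level $k$ with $J \subsetneq J'$ and $\sigma_J \in W(\rhobar)$. This relies on Proposition \ref{prop:ext} to produce such an extension inside $W_{\bk,\bk+1,I}$, and on the hypothesis $\#(I \cap \{\pm\omega^{(i)}\}) + \#(I(\rhobar,\mu) \cap \{\pm\omega^{(i)}\}) = 1$, combined with the description of $W(\rhobar)$ via $I'(\rhobar,\mu)$ from \S \ref{subsec:serre}, to ensure that such a predecessor $J$ always exists inside the indexing set governing $W(\rhobar)$.
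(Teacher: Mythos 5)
Your proposal follows the paper's approach closely: reduce to the mod-$\varpi$ module by Nakayama, induct on the depth of the filtration $\Fil^\bullet$, handle the base case with Lemma \ref{lemma:patch2}, and handle the inductive step via Lemmas \ref{lemma:covering} and \ref{lemma:ca}. However, you have the roles of $M''$ and $N$ in the application of Lemma \ref{lemma:ca} reversed, and as written the hypothesis of that lemma cannot be satisfied.

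Specifically, you set $M''$ to be $M'_\infty(N)$ for a submodule $N\subset \Fil^k R_{\mu,I}/\Fil^{k+2}R_{\mu,I}$ chosen so that the cokernel of $N\to\gr^k R_{\mu,I}$ avoids $W(\rhobar)$. But Lemma \ref{lemma:covering} then says the \emph{entire} quotient $(\Fil^k/\Fil^{k+2})/N$ avoids $W(\rhobar)$, so $M'_\infty$ kills it, i.e.\ $M'_\infty(N)=M'$. With your choice this forces $M'/M''=0$, and Lemma \ref{lemma:ca} would then require $M'$ to be minimally generated by $0$ elements, i.e.\ $M'=0$ — which is false in general. The correct bookkeeping (as in the paper) is: take $M''=M'_\infty(\gr^{k+1}R_{\mu,I})$, so that $M'/M''=M'_\infty(\gr^k R_{\mu,I})\cong\bigoplus_{J\in\mathfrak J}M'_\infty(\sigma_J)$, which is minimally generated by exactly $\#\mathfrak J$ elements. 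The submodule $N$ (the paper's $\overline V=\sum_{J\in\mathfrak J}\overline V_{J,I}$) serves a different purpose: each $\overline V_{J,I}$ is a quotient of some $R_{\mu'}/\Fil^2 R_{\mu'}$ and so has \emph{cyclic} $M'_\infty$ by Lemma \ref{lemma:patch2}, and $M'_\infty(N)=M'$ by Lemma \ref{lemma:covering}; together these bound the number of generators of $M'$ above by $\#\mathfrak J$. With that counting in hand, $M'$ and $M'/M''$ are both minimally generated by $\#\mathfrak J$ elements, Lemma \ref{lemma:ca} gives $M''\subset\fm M$, and the inductive hypothesis applied to $M/M''=M'_\infty(R_{\mu,I}/\Fil^{k+1}R_{\mu,I})$ finishes. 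Once you correct the identification of $M''$ and make the generator count explicit, your proposal coincides with the paper's proof.
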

\begin{proof}
By Nakayama's lemma, it suffices to show that $M'_\infty(R_{\mu,I})$ is a cyclic $R_\infty$-module.
We will show that $M'_\infty(R_{\mu,I}/\Fil^{k+1}_\otimes R_{\mu,I})$ is a cyclic $R_\infty$-module by induction on $k$.
If $k=1$, then the result follows from Lemma \ref{lemma:patch2}.

Now suppose that $M'_\infty(R_{\mu,I}/\Fil^{k+1}_\otimes R_{\mu,I})$ is a cyclic $R_\infty$-module.
Let $\mathfrak{J}$ be 
\[
\{J\subset S:k(J) = k,J \cap I = \emptyset,\sigma_J \in W(\rhobar)\}.
\]
Recall that for each $J\in \mathfrak{J}$,
\[
\overline{V}_J\subset \Fil^k_\otimes R_\mu/\Fil^{k+2}_\otimes R_\mu
\]
is defined before \cite[Proposition 3.9]{LMS} to be the minimal submodule whose image in $\gr^k_\otimes R_\mu$ contains $\sigma_J$.
Then we let $\overline{V}_{J,I}$ be the image of $\overline{V}_J$ in $R_{\mu,I}/\Fil^{k+2}_\otimes R_{\mu,I}$.
Note that $M'_\infty(\overline{V}_{J,I})$ is a cyclic $R_\infty$-module by Lemma \ref{lemma:patch2}.
Let $\overline{V}$ be $\sum_{J\in \mathfrak{J}} \overline{V}_{J,I} \subset \Fil^k_\otimes R_{\mu,I}/\Fil^{k+2}_\otimes R_{\mu,I}$.
By Lemma \ref{lemma:covering}, the quotient $(\Fil^k_\otimes R_{\mu,I}/\Fil^{k+2}_\otimes R_{\mu,I})/\overline{V}$ does not contain any Jordan--H\"older factors in $W(\rhobar)$.
Thus the natural inclusion $M'_\infty(\overline{V}) \subset M'_\infty(\Fil^k_\otimes R_{\mu,I}/\Fil^{k+2}_\otimes R_{\mu,I})$ is an equality.
In particular, 
\[
M'_\infty(\Fil^k_\otimes R_{\mu,I}/\Fil^{k+2}_\otimes R_{\mu,I})
\]
is generated by no more than $\# \mathfrak{J}$ elements.
On the other hand, $M'_\infty(\gr^k_\otimes R_{\mu,I}) \cong \oplus_{J\in \mathfrak{J}} M'_\infty(\sigma_J)$ is generated by (at least) $\#\mathfrak{J}$ elements.
By Lemma \ref{lemma:ca} with $M = M'_\infty(R_{\mu,I}/\Fil^{k+2}_\otimes R_{\mu,I})$, $M' = M'_\infty(\Fil^k_\otimes R_{\mu,I}/\Fil^{k+2}_\otimes R_{\mu,I})$, and $M'' = M'_\infty(\gr^{k+1}_\otimes R_{\mu,I})$, $M'_\infty(R_{\mu,I}/\Fil^{k+2}_\otimes R_{\mu,I})$ is a cyclic $R_\infty$-module.
\end{proof}

\begin{prop}\label{prop:supp}
The scheme-theoretic support of $M'_\infty(\tld{R}_{\sigma,I})$ is $\Spec (R_\infty \widehat{\otimes}_{R_{\rhobar}^{\psi,\square}} R^{\psi,T_{\sigma,I}})$.
\end{prop}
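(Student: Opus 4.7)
The plan is to determine the scheme-theoretic support by first identifying it after inverting $p$, and then using $\cO$-flatness of the patched module to pass to the integral statement.

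I would start by observing that $\tld{R}_{\mu,I}[1/p] \cong \bigoplus_{\tau \in T_{\sigma,I}} \sigma(\tau)^{n_\tau}$ as a $K[\GL_2(\F_q)]$-module, for certain positive multiplicities $n_\tau$. Fixing lattices $\sigma^\circ(\tau) \subset \sigma(\tau)$, one can arrange inclusions in both directions between $\tld{R}_{\mu,I}$ and $\bigoplus_\tau \sigma^\circ(\tau)^{n_\tau}$ with $p$-power torsion cokernels (any two $\cO$-lattices in the same $K$-vector space are commensurable). Applying the exact functor $M'_\infty$, this shows $M'_\infty(\tld{R}_{\mu,I})$ and $\bigoplus_\tau M'_\infty(\sigma^\circ(\tau))^{n_\tau}$ are commensurable as $R_\infty$-modules; in particular, $M'_\infty(\tld{R}_{\mu,I})$ is $\cO$-flat, being a submodule of an $\cO$-flat module.

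Next, I would use the defining property of a minimal patching functor \cite[Definition 6.1.3]{EGS}: each $M'_\infty(\sigma^\circ(\tau))$ is a maximal Cohen--Macaulay module over $R_\infty \widehat{\otimes}_{R_{\rhobar}^\square} R^\tau$. Combined with the nonvanishing of $M'_\infty(\sigma^\circ(\tau))$ (which follows from $\sigma \in W(\rhobar) \cap \JH(\overline{\sigma}(\tau))$) and the fact that $R^\tau$ is a Cohen--Macaulay domain under our genericity hypothesis (as computed in Theorem \ref{thm:defring}), the scheme-theoretic support of each $M'_\infty(\sigma^\circ(\tau))$ is all of $\Spec(R_\infty \widehat{\otimes}_{R_{\rhobar}^\square} R^\tau)$. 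Therefore the support of $M'_\infty(\tld{R}_{\mu,I})$ after inverting $p$ is $\bigcup_{\tau \in T_{\sigma,I}} \Spec(R_\infty \widehat{\otimes}_{R_{\rhobar}^\square} R^\tau)[p^{-1}]$.

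Finally, since $M'_\infty(\tld{R}_{\mu,I})$ is $\cO$-flat and finite over $R_\infty$, its scheme-theoretic support equals the flat closure of its generic fiber in $\Spec R_\infty$. By the definition of $R^{T_{\sigma,I}}$ as the Zariski closure of $\bigcup_\tau \Spec R^\tau[p^{-1}]$ in $\Spec R_{\rhobar}^\square$, together with the fact that $R_\infty$ is a formal power series ring over $R_{\rhobar}^\square$ (so the Zariski closure commutes with this base change), this flat closure is precisely $\Spec(R_\infty \widehat{\otimes}_{R_{\rhobar}^\square} R^{T_{\sigma,I}})$. The main obstacle, in my view, is verifying that each $M'_\infty(\sigma^\circ(\tau))$ has full support rather than being merely contained in $\Spec(R_\infty \widehat{\otimes} R^\tau)$: this requires both its nonvanishing and the MCM property together with the domain property of $R^\tau$, all of which rest on the genericity hypothesis and the deformation-theoretic computations of \S \ref{sec:defring}.
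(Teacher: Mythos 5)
Your proof is correct and follows the same route as the paper: compute the support after inverting $p$ as a union of generic fibers of $\Spec(R_\infty \widehat{\otimes}_{R_{\rhobar}^\square} R^\tau)$, then pass to the integral statement using $\cO$-flatness and the definition of $R^{T_{\sigma,I}}$ as a Zariski (= flat) closure. The only difference is that you re-derive the generic-fiber support from first principles (nonvanishing, the maximal Cohen--Macaulay property, and $R^\tau$ being a domain), whereas the paper simply cites the proof of \cite[Theorem 9.1.1]{EGS}, and you detour through lattice commensurability to get $\cO$-flatness of $M'_\infty(\tld{R}_{\mu,I})$, which is actually immediate from the patching-functor axioms applied to the lattice $\tld{R}_{\mu,I}$.
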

\begin{proof}
Since $M'_\infty(\tld{R}_{\sigma,I})[p^{-1}]$ is isomorphic to $\oplus_{\sigma(\tau) \in T_{\sigma,I}} M'_\infty(\sigma(\tau))$, the scheme-theoretic support of $M'_\infty(\tld{R}_{\sigma,I})[p^{-1}]$ is $\cup_{\sigma(\tau) \in T_{\sigma,I}} \Spec (R_\infty \widehat{\otimes}_{R_{\rhobar}^{\psi,\square}} R^{\psi,\tau})[p^{-1}]$ by the proof of \cite[Theorem 9.1.1]{EGS}.
Since $M'_\infty(\tld{R}_{\sigma,I})$ is $\cO$-flat by definition of a patching functor, the scheme-theoretic support of $M'_\infty(\tld{R}_{\sigma,I})$ is the Zariski closure of that of $M'_\infty(\tld{R}_{\sigma,I})[p^{-1}]$.
The result now follows from the definition of $\Spec R^{\psi,T_{\sigma,I}}$.
\end{proof}

In order to weaken the hypotheses on $I$ in Proposition \ref{prop:oldcyc}, we compute an integral scheme intersection, of which the following lemma is the key example.

\begin{lemma}\label{lemma:glue}
There is an exact sequence 
\[0 \ra \cO[\![Y]\!]/(Y(Y-p)) \ra \cO[\![Y]\!]/(Y) \oplus \cO[\![Y]\!]/(Y-p) \ra \cO[\![Y]\!]/(Y,p) \ra 0,\] where the second and third maps are the sum and difference, respectively, of the natural projections.
\end{lemma}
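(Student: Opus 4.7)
The plan is to identify the claimed sequence as a special case of the general Mayer--Vietoris-type sequence for ideals. Namely, for any ring $R$ and ideals $I, J \subset R$, there is a short exact sequence
\[0 \ra R/(I \cap J) \ra R/I \oplus R/J \ra R/(I+J) \ra 0,\]
where the first map is $(r \bmod (I \cap J)) \mapsto (r \bmod I,\, r \bmod J)$ and the second is $(a,b) \mapsto (a - b) \bmod (I+J)$. Exactness at the outer terms is elementary, and exactness in the middle follows from the fact that if $a - b = i + j$ with $i\in I$, $j \in J$, then $r \defeq a - i = b + j$ lifts the pair $(a,b)$.

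It remains to apply this with $R = \cO[\![Y]\!]$, $I = (Y)$, and $J = (Y-p)$, and to check the two ideal identities $I + J = (Y,p)$ and $I \cap J = (Y(Y-p))$. The first is trivial, since $Y - (Y-p) = p$. The inclusion $(Y(Y-p)) \subset (Y) \cap (Y-p)$ is also immediate, so the only real content is the reverse inclusion.

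So suppose $f \in (Y) \cap (Y-p)$, i.e.~$f(0) = 0$ and $f(p) = 0$ (evaluation makes sense because $\cO$ is $p$-adically complete). Writing $f = Y \cdot g$ with $g \in \cO[\![Y]\!]$, the condition $f(p) = 0$ forces $p\, g(p) = 0$ and hence $g(p) = 0$ in the domain $\cO$. The main step is therefore the power-series Weierstrass-style factorization: if $g(p) = 0$, then $g \in (Y-p)$. Writing $g = \sum_{i \geq 0} a_i Y^i$ and seeking $h = \sum_{i \geq 0} b_i Y^i$ with $(Y-p)h = g$, one obtains the recursion $-p b_0 = a_0$ and $b_{i-1} - p b_i = a_i$ for $i \geq 1$. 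The relation $g(p) = \sum_i a_i p^i = 0$ shows that $a_0 \in p\cO$, so $b_0 \defeq -a_0/p \in \cO$, and an easy induction (using the same relation to rewrite $b_{i-1} - a_i$ as a convergent series in $p\cO$) yields $b_i \in \cO$ for all $i$. This gives $g = (Y-p)h$ and therefore $f = Y(Y-p)h$, completing the proof of $I \cap J = (Y(Y-p))$.

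The Weierstrass factorization step is the only substantive obstacle; everything else is formal. Once it is in hand, substituting into the general framework yields exactly the stated sequence, with the first map being the sum of the natural projections (since $f \mapsto (f \bmod Y, f \bmod (Y-p))$) and the second being their difference, as claimed.
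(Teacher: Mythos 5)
Your proposal is correct and takes essentially the same route as the paper: identify the claimed sequence as an instance of the general Mayer--Vietoris short exact sequence $0 \ra R/(I\cap J) \ra R/I \oplus R/J \ra R/(I+J) \ra 0$, then compute $I \cap J$ and $I + J$ for $I = (Y)$, $J = (Y-p)$ in $\cO[\![Y]\!]$. The only difference is that the paper asserts the two ideal identities without proof, whereas you supply the Weierstrass-style division argument showing $(Y)\cap(Y-p) = (Y(Y-p))$; one could also observe directly that $Y$ and $Y-p$ are non-associate primes in the UFD $\cO[\![Y]\!]$, which gives the intersection formula immediately, but your argument is sound.
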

\begin{proof}
Given a ring $R$ and ideals $I$ and $J\subset R$, the sequence 
\[0 \ra R/ (I \cap J) \ra R/I \oplus R/J \ra R/(I+J) \ra 0,\]
where the second and third maps are the sum and difference, respectively, of the natural projections, is exact.
The lemma follows from this exact sequence and the relations $(Y) \cap (Y-p) = (Y(Y-p))$ and $(Y)+(Y-p) = (Y,p)$ in $\cO[\![Y]\!]$.
\end{proof}

The following is our main result in the setting of patching functors.
Recall that $\rhobar$ is generic, but not semisimple.

\begin{thm}\label{thm:multone}
Suppose that $I \subset S$ such that $\#(I \cap \{\pm \omega^{(i)}\}) + \#(S_\rhobar^\sigma \cap \{\pm \omega^{(i)}\}) \leq 1$.
Then $M'_\infty(\tld{R}_{\mu,I})$ is a cyclic $R_\infty$-module.
\end{thm}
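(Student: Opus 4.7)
The plan is to proceed by induction on
\[n(I) \defeq \#\{i \in \Z/f : \{\pm\omega^{(i)}\} \cap (I \cup I(\rhobar,\mu)) = \emptyset\}.\]
When $n(I) = 0$ the hypothesis forces $\#(I \cap \{\pm \omega^{(i)}\}) + \#(I(\rhobar,\mu) \cap \{\pm \omega^{(i)}\}) = 1$ for every $i$, so Proposition \ref{prop:oldcyc} applies directly. The inductive step is where the new ingredients of \S \ref{sec:proj} and \S \ref{sec:defring} get combined.

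For the inductive step, I would pick $j \in \Z/f$ with $\{\pm\omega^{(j)}\} \cap (I \cup I(\rhobar,\mu)) = \emptyset$ and write $I^{\pm} = I \cup \{\pm\omega^{(j)}\}$. Proposition \ref{prop:exseq} provides an exact sequence whose image under the exact functor $M'_\infty$ is
\[0 \to M'_\infty(\tld{R}_{\mu,I}) \to M'_\infty(\tld{R}_{\mu,I\cup\{\omega^{(j)}\}}) \oplus M'_\infty(\tld{R}_{\mu,I\cup\{-\omega^{(j)}\}}) \to M'_\infty(R_{\mu,I^\pm}) \to 0. \quad (*)\]
Since $n(I\cup \{\omega^{(j)}\}) = n(I\cup\{-\omega^{(j)}\}) = n(I) - 1$, the inductive hypothesis says both middle summands are cyclic, and Proposition \ref{prop:supp} identifies them with $R_\infty \widehat\otimes_{R_{\rhobar}^\square} R^{T_{\sigma,I\cup\{\pm \omega^{(j)}\}}}$ as $R_\infty$-modules.

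Next, I would use Theorem \ref{thm:defring} to read off the explicit structure: $R^{T_{\sigma,I}}$, $R^{T_{\sigma,I\cup\{\omega^{(j)}\}}}$, $R^{T_{\sigma,I\cup\{-\omega^{(j)}\}}}$ are obtained from the same power series ring by imposing relations that differ only at the index $f{-}1{-}j$, where $I(\rhobar,\mu)\cap\{\pm\omega^{(j)}\}=\emptyset$ turns the quadratic relation $Y_{f-1-j}(Y_{f-1-j}-p)$ into its two linear factors. Feeding this into Lemma \ref{lemma:glue} (with $Y = Y_{f-1-j}$) and tensoring with the outer structure and then with $R_\infty \widehat\otimes_{R_{\rhobar}^\square}(-)$ yields the algebraic exact sequence
\[0 \to R_\infty \widehat\otimes R^{T_{\sigma,I}} \to R_\infty \widehat\otimes R^{T_{\sigma,I\cup\{\omega^{(j)}\}}} \oplus R_\infty \widehat\otimes R^{T_{\sigma,I\cup\{-\omega^{(j)}\}}} \to R_\infty \widehat\otimes R^{T_{\sigma,I}}/(Y_{f-1-j},p) \to 0 \quad (**)\]
whose second and third maps are again the sum and the difference of the natural projections, matching Proposition \ref{prop:exseq}.

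The last step is to identify $(*)$ with $(**)$ via the middle terms. The middles already agree as $R_\infty$-modules; the two right-hand maps match on cosocles because in both cases they are the difference of two surjections onto a single Serre weight $\sigma$. A Nakayama argument (cf.\ Lemma \ref{lemma:ca}) promotes this cosocle agreement to an isomorphism $M'_\infty(R_{\mu,I^\pm}) \cong R_\infty \widehat\otimes R^{T_{\sigma,I}}/(Y_{f-1-j},p)$, and comparing kernels then gives $M'_\infty(\tld{R}_{\mu,I}) \cong R_\infty \widehat\otimes R^{T_{\sigma,I}}$, which is cyclic. The main obstacle is precisely this identification: one needs to choose the cyclic generators of the two middle summands compatibly so that their difference realises the algebraic quotient map from Lemma \ref{lemma:glue}. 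An alternative I would consider, to avoid fixing explicit generators, is to study the image of $M'_\infty(\tld{R}_{\mu,I})$ in the middle term intrinsically via its annihilator, which by Proposition \ref{prop:supp} and Theorem \ref{thm:defring} equals the ideal defining $R_\infty \widehat\otimes R^{T_{\sigma,I}}$; cyclicity of the kernel then follows formally from $(**)$.
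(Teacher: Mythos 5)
Your overall strategy matches the paper's: induction on the number of $j$ with $\{\pm\omega^{(j)}\}\cap(I\cup I(\rhobar,\mu))=\emptyset$, base case from Proposition \ref{prop:oldcyc}, Proposition \ref{prop:exseq} plus exactness of $M'_\infty$ to descend, identification of the middle summands via cyclicity (induction) and Proposition \ref{prop:supp}, and comparison with the algebraic exact sequence coming from Theorem \ref{thm:defring} and Lemma \ref{lemma:glue}. However, there is a genuine gap at the step where you need to pin down $M'_\infty(R_{\mu,I\cup\{\pm\omega^{(j)}\}})$.

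What you have for free is a surjection $R_\infty\widehat\otimes_{R_{\rhobar}^\square} R^{T_{\sigma,I\cup\{\omega^{(j)}\}}}/p\twoheadrightarrow M'_\infty(R_{\mu,I\cup\{\pm\omega^{(j)}\}})$: the third map in your $(*)$ is the sum of two surjections (by exactness applied to Proposition \ref{prop:exseq}), and the image lands inside the intersection of the two supports, which by Theorem \ref{thm:defring} is cut out by $(Y_{f-1-j},p)$; cyclicity of the source gives the surjection. Your claim that ``the two right-hand maps match on cosocles'' and that a ``Nakayama argument (cf.\ Lemma \ref{lemma:ca}) promotes this cosocle agreement to an isomorphism'' does not work: a surjection of cyclic modules over a local ring that agrees on cosocles is not automatically an isomorphism, and Lemma \ref{lemma:ca} is about nested inclusions $M''\subset M'\subset M$ and bounds on generators, not about injectivity of a given surjection. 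The actual content here is an intersection-theoretic length comparison: by Proposition \ref{prop:Wk} the $\F$-modules $R_{\mu,I\cup\{\omega^{(j)}\}}$ and $R_{\mu,I\cup\{\pm\omega^{(j)}\}}$ differ by a factor of $2$ in length, and correspondingly $M'_\infty(R_{\mu,I\cup\{\omega^{(j)}\}})$ and $M'_\infty(R_{\mu,I\cup\{\pm\omega^{(j)}\}})$ have the same Hilbert--Samuel multiplicity (since $\{\pm\omega^{(j)}\}\cap I(\rhobar,\mu)=\emptyset$ and the extra Jordan--H\"older factors are not in $W(\rhobar)$, so contribute nothing to $M'_\infty$). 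This equality of multiplicities, combined with the fact that $R^{T_{\sigma,I\cup\{\omega^{(j)}\}}}/p$ is reduced/has no embedded primes, is what forces the surjection to be an isomorphism (as in \cite[Lemma 6.1.1]{le}). Without this step your identification of $(*)$ with $(**)$, and hence the conclusion $M'_\infty(\tld{R}_{\mu,I})\cong R_\infty\widehat\otimes R^{T_{\sigma,I}}$, is not established. Your proposed ``alternative'' via annihilators has the same problem: Proposition \ref{prop:supp} controls scheme-theoretic support, not the module itself, so knowing the annihilator does not by itself produce a generator.

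One more small remark: once the surjection is shown to be an isomorphism, you do still need to normalize the third map in $(*)$ to be the difference of the standard projections before you can literally compare with Lemma \ref{lemma:glue}; the paper handles this by noting that any lift of a generator under a surjection of cyclic local modules is again a generator, which is the correct place to invoke Nakayama. So the Nakayama input is there, but at a different (and easier) step than where you put it.
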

\begin{proof}
We proceed by induction on $k := f - \#S_\rhobar^\sigma - \#I$.
The case $k=0$ follows from Proposition \ref{prop:oldcyc}.
Suppose that $k>0$ and that $(I\cup S_\rhobar^\sigma)\cap \{\pm\omega^{(j)}\} = \emptyset$.
Then there is an exact sequence
\[0 \ra \tld{R}_{\mu,I} \ra \tld{R}_{\mu,I \cup \{\omega^{(j)}\} } \oplus \tld{R}_{\mu,I \cup \{-\omega^{(j)}\} } \ra R_{\mu,I \cup \{\pm\omega^{(j)}\} } \ra 0,\]
which induces an exact sequence
\[0 \ra M'_\infty(\tld{R}_{\mu,I}) \ra M'_\infty(\tld{R}_{\mu,I \cup \{\omega^{(j)}\} }) \oplus M'_\infty(\tld{R}_{\mu,I \cup \{-\omega^{(j)}\} }) \ra M'_\infty(R_{\mu,I \cup \{\pm\omega^{(j)}\} }) \ra 0,\]
where the third map is the sum of two surjections by exactness of $M'_\infty(\cdot)$.
By the inductive hypothesis and Proposition \ref{prop:supp}, $M'_\infty(\tld{R}_{\mu,I \cup \{\omega^{(j)}\} })$ and $M'_\infty(\tld{R}_{\mu,I \cup \{-\omega^{(j)}\} })$ are cyclic $R_\infty$-modules with scheme-theoretic support $\Spec R_\infty \widehat{\otimes}_{R_{\rhobar}^{\psi,\square}} R^{\psi,T_{\sigma,I \cup \{\omega^{(j)}\} }}$ and $\Spec R_\infty \widehat{\otimes}_{R_{\rhobar}^{\psi,\square}}R^{\psi,T_{\sigma,I \cup \{-\omega^{(j)}\} }}$, respectively.
The scheme-theoretic support of $M'_\infty(R_{\mu,I \cup \{\pm\omega^{(j)}\} })$ is thus a closed subscheme of the intersections of $\Spec R_\infty \widehat{\otimes}_{R_{\rhobar}^{\psi,\square}} R^{\psi,T_{\sigma,I \cup \{\omega^{(j)}\} }}$ and $\Spec R_\infty \widehat{\otimes}_{R_{\rhobar}^{\psi,\square}} R^{\psi,T_{\sigma,I \cup \{-\omega^{(j)}\} }}$, which is $\Spec R_\infty \widehat{\otimes}_{R_{\rhobar}^{\psi,\square}} R^{\psi,T_{\sigma,I \cup \{\omega^{(j)}\} }}/p$ by Theorem \ref{thm:defring} and Lemma \ref{lemma:illset} (we can assume without loss of generality that $\mu$ has the form in \S \ref{sec:defring} by twisting).
Since $M'_\infty(R_{\mu,I \cup \{\pm\omega^{(j)}\} })$ is a cyclic $R_\infty$-module, there is a surjection 
\[R_\infty \widehat{\otimes}_{R_{\rhobar}^{\psi,\square}} R^{\psi,T_{\sigma,I \cup \{\omega^{(j)}\} }}/p \surj M'_\infty(R_{\mu,I \cup \{\pm\omega^{(j)}\} }).\]
Since $\{\pm \omega^{(j)} \} \cap S_\rhobar^\sigma = \emptyset$, from Proposition \ref{prop:Wk} we see that $M'_\infty(R_{\mu,I \cup \{\omega^{(j)}\} })$ and $M'_\infty(R_{\mu,I \cup \{\pm\omega^{(j)}\} })$ have the same Hilbert--Samuel multiplicity.
Thus, both sides of the map $R_\infty \widehat{\otimes}_{R_{\rhobar}^{\psi,\square}} R^{\psi,T_{\sigma,I \cup \{\omega^{(j)}\} }}/p \surj M'_\infty(R_{\mu,I \cup \{\pm\omega^{(j)}\} })$ have the same Hilbert--Samuel multiplicity.
Since $R^{\psi,T_{\sigma,I \cup \{\omega^{(j)}\} }}/p$ contains no embedded primes, this map is an isomorphism (see the argument of \cite[Lemma 6.1.1]{le}).

In summary, there is an exact sequence
\[0 \ra M'_\infty(\tld{R}_{\mu,I}) \ra R_\infty \widehat{\otimes}_{R_{\rhobar}^{\psi,\square}} R^{\psi,T_{\sigma,I \cup \{\omega^{(j)}\} }} \oplus R_\infty \widehat{\otimes}_{R_{\rhobar}^{\psi,\square}} R^{\psi,T_{\sigma,I \cup \{-\omega^{(j)}\} }} \ra R_\infty \widehat{\otimes}_{R_{\rhobar}^{\psi\square}} R^{\psi,T_{\sigma,I \cup \{\omega^{(j)}\} }}/p \ra 0,\]
where the third map is the sum of two surjections.
Any lift of a generator under a surjection between two cyclic modules over a local ring is again a generator by Nakayama's lemma.
Hence, we can assume that the third map is the difference of the natural projections.
Then by Theorem \ref{thm:defring} and Lemma \ref{lemma:illset}, this exact sequence is obtained from taking a completed tensor product with the exact sequence in Lemma \ref{lemma:glue}.
Hence, we see that $M'_\infty(\tld{R}_{\mu,I}) \cong R_\infty \widehat{\otimes}_{R_{\rhobar}^{\psi,\square}} R^{\psi,T_{\sigma,I}}$, and in particular that $M'_\infty(\tld{R}_{\mu,I})$ is a cyclic $R_\infty$-module.
\end{proof}

\section{Global results}\label{sec:main}

Let $F$ be a totally real field in which $p$ is unramified.
Let $D_{/F}$ be a quaternion algebra which is unramified at all places dividing $p$ and at most one infinite place, and let $\rbar:G_F \ra \GL_2(\F)$ be a Galois representation.
If $D_{/F}$ is indefinite and $K = \prod_w K_w \subset (D\otimes_F \A_F^\infty)^\times$ is an open compact subgroup, then there is a smooth projective curve $X_K$ defined over $F$ and we define $S(K,\F)$ to be $H^1((X_K)_{/\overline{F}},\F)$.
If $D_{/F}$ is definite, then we let $S(K,\F)$ be the space of $K$-invariant continuous functions
\[f: D^\times\backslash (D\otimes_F \mathbb{A}_F^\infty)^\times \ra \F.\]
Let $S$ be the union of the set of places in $F$ where $\rbar$ is ramified, the set of places in $F$ where $D$ is ramified, and the set of places in $F$ dividing $p$.
Let $\mathbb{T}^{S,\mathrm{univ}}$ be the commutative polynomial algebra over $\cO$ generated by the formal variables $T_w$ and $S_w$ for each $w \notin S\cup \{w_1\}$ where $w_1$ is chosen as in \cite[\S 6.2]{EGS}.
Then $\mathbb{T}^{S,\mathrm{univ}}$ acts on $S(K,\F)$ with $T_w$ and $S_w$ acting by the usual double coset action of
\[ \big[ \GL_2(\cO_{F_w}) \begin{pmatrix}
  \varpi_w &   \\
   & 1 \\
 \end{pmatrix}\GL_2(\cO_{F_w}) \big] \]
and 
\[ \big[ \GL_2(\cO_{F_w}) \begin{pmatrix}
  \varpi_w &   \\
   & \varpi_w \\
 \end{pmatrix}\GL_2(\cO_{F_w}) \big], \]
respectively.
Let $\mathbb{T}^{S,\mathrm{univ}}\ra \F$ be the map such that the image of $X^2 - T_w X + (\mathbb{N}w) S_w$ in $\F[X]$ is the characteristic polynomial of $\rhobar^\vee(\Frob_w)$, where $\Frob_w$ is a geometric Frobenius element at $w$, and let the kernel be $\fm_{\rbar}$.

For the rest of the section, suppose that
\begin{enumerate}
\item $\rbar$ is modular, i.e. that there exists $K$ such that $S(K,\F)_{\fm_{\rbar}}$ is nonzero;
\item $\rbar|_{G_{F(\zeta_p)}}$ is absolutely irreducible;
\item if $p=5$ then the image of $\rbar(G_{F(\zeta_p)})$ in $\PGL_2(\F)$ is not isomorphic to $A_5$;
\item $\rbar|_{G_{F_w}}$ is generic (Definition \ref{def:gen}) for all places $w|p$; and
\item $\rbar|_{G_{F_w}}$ is non-scalar at all finite places where $D$ ramifies.
\end{enumerate}
Let $v|p$ be a place of $F$, and let $\rhobar$ be $\rbar|_{G_{F_v}}$.
Let $k_v$ be the residue field of $F_v$.

We define $S^{\mathrm{min}}$ to be $S(K^v,\otimes_{w\in S,w\neq v} L_w)_{\fm'_{\rbar}}$ as in \cite[\S 6.5]{EGS}.
We define $M^{\mathrm{min}}$ to be the $\F$-linear dual of $(S^{\mathrm{min}}\otimes_{\cO}\F)[\fm'_{\rbar}]$, factoring out the Galois action in the indefinite case (see \cite[\S 6.2]{EGS}).

\begin{thm}\label{thm:K1multone}
Suppose that $\rbar:G_F \ra \GL_2(\F)$ is a Galois representation satisfying $(1)$-$(5)$.
If $\sigma \in W(\rhobar)$ and $R_\sigma$ is the $\F[\GL_2(k_v)]$-projective envelope of $\sigma$, then $\Hom_{\F[\GL_2(k_v)]}(R_\sigma,(M^{\mathrm{min}})^*)$ is one-dimensional.
\end{thm}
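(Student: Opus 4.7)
The plan is to derive Theorem~\ref{thm:K1multone} from the local cyclicity statement Theorem~\ref{thm:multone} via the standard Taylor--Wiles patching machinery.

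\textbf{Setup.} Under hypotheses $(1)$--$(5)$, I would first construct a minimal fixed-determinant patching functor $M_\infty$ over $\cO$ in the sense of \cite[Definition 6.1.3]{EGS}, applied to the automorphic input $M^{\min}$. This standard construction yields a formal power series ring $R_\infty$ over the local framed deformation ring $R_{\rhobar}^{\square}$, together with an augmentation ideal $\mathfrak{a}_\infty \subset R_\infty$, such that for every continuous admissible $\cO[\![\GL_2(\cO_{F_v})]\!]$-module $N$ one has a natural identification
\[\Hom_{\cO[\GL_2(\cO_{F_v})]}(N, (M^{\min})^{*})^{\vee} \;\cong\; M_\infty(N)/\mathfrak{a}_\infty M_\infty(N),\]
and $M_\infty$ is exact and assigns rank one to each Deligne--Lusztig component at the generic point.

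\textbf{Applying Theorem~\ref{thm:multone}.} Writing $\sigma = F(\mu-\eta)$, we have $R_\sigma \cong \tld{R}_{\mu,\emptyset}\otimes_{\cO_K}\cO$, and hence $M_\infty(R_\sigma) = M'_\infty(\tld{R}_{\mu,\emptyset})$. By Proposition~\ref{prop:phi}, $\#(I(\rhobar,\mu)\cap\{\pm\omega^{(i)}\}) \leq 1$ for all $i$, so Theorem~\ref{thm:multone} applies with $I = \emptyset$, yielding that $M_\infty(R_\sigma)$ is a cyclic $R_\infty$-module with scheme-theoretic support $\Spec(R_\infty \widehat{\otimes}_{R_{\rhobar}^{\square}} R^{T_{\sigma,\emptyset}})$ by Proposition~\ref{prop:supp}. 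Consequently $\Hom_{\cO[\GL_2(\F_q)]}(R_\sigma, (M^{\min})^*)$ is Pontryagin dual to a cyclic $R_\infty/\mathfrak{a}_\infty$-module.

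\textbf{From cyclicity to one-dimensionality.} To promote ``cyclic'' to ``$\F$-one-dimensional'', I would compare with the mod-$p$ multiplicity one for the single Serre weight $\sigma \in W(\rhobar)$. The cosocle surjection $R_\sigma \twoheadrightarrow \sigma$ induces, by exactness of $M_\infty$, a surjection $M_\infty(R_\sigma)/\mathfrak{a}_\infty \twoheadrightarrow M_\infty(\sigma)/\mathfrak{a}_\infty$; the target is known to be $\F$-one-dimensional (the Serre weight multiplicity one statement from \cite[Theorem 10.1.1]{EGS}, or its wild analogue obtained by applying Theorem~\ref{thm:multone} to $\sigma$ in place of $R_\sigma$ via the same minimality argument). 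A Hilbert--Samuel multiplicity computation, using the explicit description of $R^{T_{\sigma,\emptyset}}$ from Theorem~\ref{thm:defring} together with the rank-one property of the minimal patching functor on each $R^\tau$, matches both sides and forces this surjection to be an isomorphism. Dualizing gives the theorem.

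\textbf{Main obstacle.} The principal difficulty is the final step: cyclicity of $M_\infty(R_\sigma)$ over $R_\infty$ gives one-dimensionality only after reducing by the full maximal ideal $\mathfrak{m}_{R_\infty}$, whereas the patching identification involves the smaller ideal $\mathfrak{a}_\infty$. Bridging the gap requires the explicit local structure of the multitype Barsotti--Tate deformation ring $R^{T_{\sigma,\emptyset}}$ (Theorem~\ref{thm:defring}) and the rank-one property of minimal patching functors on Deligne--Lusztig components, following the template of \cite[\S 10]{EGS}.
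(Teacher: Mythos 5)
Your overall strategy — identify $R_\sigma$ with $\tld R_{\mu,\emptyset}\otimes_{\cO_K}\cO$, feed it to the minimal fixed-determinant patching functor of \cite[\S 6.5]{EGS}, and invoke Theorem~\ref{thm:multone} with $I=\emptyset$ — is exactly the paper's route, and you invoke Proposition~\ref{prop:phi} correctly to justify the hypotheses. The problem is in your stated patching identification and the ``main obstacle'' it creates.

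You write the patching identification as
\[
\Hom_{\cO[\GL_2(\cO_{F_v})]}(N,(M^{\min})^*)^{\vee}\cong M_\infty(N)/\mathfrak{a}_\infty M_\infty(N),
\]
with $\mathfrak{a}_\infty$ the augmentation ideal, and then flag as the ``main obstacle'' the gap between reducing mod $\mathfrak{a}_\infty$ and reducing mod $\fm_{R_\infty}$. But this is a misreading of what the construction delivers. Because $S^{\min}$ is already localized at $\fm'_{\rbar}$ and $(M^{\min})^*$ is the $\F$-dual, the Hom space into $(M^{\min})^*$ computes a $\fm$-fiber, not an $\fm$-adic completion: unwinding the dualities in the construction of \cite[\S 6.5]{EGS} gives directly
\[
\Hom_{\GL_2(\F_q)}(R_\sigma,(M^{\min})^*)^{*}\cong M_\infty(R_\sigma)/\fm_{R_\infty}M_\infty(R_\sigma),
\]
with the \emph{full} maximal ideal. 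This is what the paper means by ``by construction,'' and once stated this way the theorem is immediate: $M'_\infty(\tld R_\mu)$ cyclic implies the fiber at $\fm_{R_\infty}$ is at most one-dimensional, and it is nonzero because $\sigma\in W(\rhobar)$ forces $M_\infty(\sigma)\neq 0$ and hence $M_\infty(R_\sigma)\neq 0$. There is no gap to bridge.

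Moreover, the bridge you propose does not work. You want to compare $M_\infty(R_\sigma)/\mathfrak{a}_\infty\twoheadrightarrow M_\infty(\sigma)/\mathfrak{a}_\infty$ via Hilbert--Samuel multiplicities and force it to be an isomorphism. But $R_\sigma$ contains every Serre weight in $W(\rhobar)$ as a Jordan--H\"older factor (indeed $\#W(\rhobar)$-many with positive multiplicity even after cutting to a single $\Fil$-step), while $\sigma$ contributes a single weight, so $M_\infty(R_\sigma)$ is supported on the whole of $R^{T_{\sigma,\emptyset}}$ whereas $M_\infty(\sigma)$ has support on a single irreducible component of the special fiber. Their Hilbert--Samuel multiplicities are genuinely different, so the surjection is not an isomorphism, and the argument as stated breaks down. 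The resolution is not to repair this comparison but to use the correct identification with $\fm_{R_\infty}$, which makes the multiplicity comparison unnecessary.
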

\begin{proof}
The case where $\rhobar$ is semisimple follows from \cite[Corollary 5.4]{LMS}.
We now assume that $\rhobar$ is not semisimple.
Let $\sigma = F(\mu-\eta) \in W(\rhobar)$.
Identify $k_v$ with a finite field $\F_q$.
Then $R_\sigma$ is $R_\mu \otimes_{\F_q} \F$.
Let $M_\infty$ be the minimal fixed determinant patching functor defined in \cite[\S 6.5]{EGS}.
By construction, if $\fm_{R_\infty}$ is the maximal ideal of $R_\infty$, then $\Hom_{\GL_2(\F_q)}(R_\sigma,(M^{\mathrm{min}})^*)$ is the dual of $M_\infty(R_\sigma)/\fm_{R_\infty} = M_\infty'(R_\mu)/\fm_{R_\infty}$, which is one dimensional since $M'_\infty(R_\mu)$ is a cyclic $R_\infty$-module by Theorem \ref{thm:multone}.
\end{proof}

Let $M^{\mathrm{min}}(K_v(1))$ denote the space of coinvariants $(M^{\mathrm{min}})_{K_v(1)}$.
Note that $M^{\mathrm{min}}(K_v(1))$ is isomorphic to the dual of $(S(K^vK_v(1),\otimes_{w\in S,w\neq v} L_w)\otimes_{\cO}\F)[\fm'_{\rbar}]$, factoring out the Galois action in the indefinite case, by a standard spectral sequence argument using that $\fm_{\rbar}'$ is non-Eisenstein.

\begin{cor}\label{cor:main}
Suppose that $\rbar:G_F \ra \GL_2(\F)$ is a Galois representation satisfying $(1)$-$(5)$.
Then the $\GL_2(\F_q)$-representation $(M^{\mathrm{min}}(K_v(1)))^*$ is isomorphic to $D_0(\rhobar)$.
In particular, $(M^{\mathrm{min}}(K_v(1)))^*$ depends only on $\rhobar$ and is multiplicity free.
\end{cor}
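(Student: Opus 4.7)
The plan is to deduce the corollary from Theorem \ref{thm:K1multone} together with the known embedding $D_0(\rhobar)\hookrightarrow \pi^{K_v(1)} = (M^{\mathrm{min}}(K_v(1)))^*$ of \cite[Proposition 9.3]{breuil}, by a Jordan--H\"older length comparison.

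First, I translate Theorem \ref{thm:K1multone} into a multiplicity statement. Since $R_\sigma$ is a $\GL_2(\F_q)$-module, inflation along $\GL_2(\cO_{F_v})\surj \GL_2(k_v)$ makes $K_v(1)$ act trivially, so any $\cO[\GL_2(\F_q)]$-linear map $R_\sigma\ra (M^{\mathrm{min}})^*$ factors through the $K_v(1)$-invariants, and hence
\[\Hom_{\cO[\GL_2(\F_q)]}(R_\sigma,(M^{\mathrm{min}})^*) = \Hom_{\cO[\GL_2(\F_q)]}(R_\sigma,(M^{\mathrm{min}}(K_v(1)))^*).\]
As $(M^{\mathrm{min}}(K_v(1)))^*$ is an $\F$-vector space, the right-hand side equals $\Hom_{\F[\GL_2(\F_q)]}(R_\sigma/\varpi,(M^{\mathrm{min}}(K_v(1)))^*)$, and since $R_\sigma/\varpi$ is the projective $\F[\GL_2(\F_q)]$-envelope of $\sigma$, the standard theory of projective envelopes identifies this dimension with the Jordan--H\"older multiplicity $[(M^{\mathrm{min}}(K_v(1)))^*:\sigma]$. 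Theorem \ref{thm:K1multone} then gives $[(M^{\mathrm{min}}(K_v(1)))^*:\sigma]=1$ for each $\sigma\in W(\rhobar)$.

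Next, I would invoke the weight part of Serre's conjecture, established in the present generality in \cite{EGS}, to conclude that any Jordan--H\"older factor of $(M^{\mathrm{min}}(K_v(1)))^*$ must already lie in $W(\rhobar)$. Combined with the previous paragraph, this identifies the Jordan--H\"older factors of $(M^{\mathrm{min}}(K_v(1)))^*$ with $W(\rhobar)$, each occurring with multiplicity one, exactly as for $D_0(\rhobar)$. In particular the two representations have equal finite length, so the embedding $D_0(\rhobar) \hookrightarrow (M^{\mathrm{min}}(K_v(1)))^*$ supplied by \cite[Proposition 9.3]{breuil} is forced to be an isomorphism, which is the assertion of the corollary. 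I expect the main subtlety to be in the first step: one must check that $(M^{\mathrm{min}}(K_v(1)))^*$ really is annihilated by $\varpi$ (and not merely by some power of $\varpi$), so that the integral projective envelope $R_\sigma$ can be replaced by its reduction $R_\sigma/\varpi$ and the standard projective-envelope multiplicity formula applies cleanly. Once that point is verified, the remainder of the argument is a routine length count.
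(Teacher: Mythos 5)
There is a genuine gap, and it lies in the central step where you invoke the weight part of Serre's conjecture. That result controls only the \emph{socle} of $(M^{\mathrm{min}}(K_v(1)))^*$: it says the $\GL_2(\F_q)$-socle is $\oplus_{\sigma\in W(\rhobar)}\sigma$. It does \emph{not} say that every Jordan--H\"older factor of $(M^{\mathrm{min}}(K_v(1)))^*$ lies in $W(\rhobar)$. Indeed $D_0(\rhobar)$ itself, by the very design of Breuil--Pa\v sk\={u}nas's construction, has many Jordan--H\"older factors outside $W(\rhobar)$ (only its socle equals $\oplus_{\sigma\in W(\rhobar)}\sigma$). If the claim "JH factors of $(M^{\mathrm{min}}(K_v(1)))^*$ are exactly $W(\rhobar)$, each once" were true, the module would have to equal its own socle, i.e.\ be semisimple, forcing $D_0(\rhobar)$ to be semisimple as well, which is false in general. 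So the proposed length count cannot close the argument: Theorem \ref{thm:K1multone} bounds the multiplicities of $\sigma\in W(\rhobar)$, but says nothing about multiplicities of Serre weights outside $W(\rhobar)$, and those are exactly the ones you need to control.

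The paper avoids this by using the \emph{maximality} characterization of $D_0(\rhobar)$ from \cite[Theorem 1.1(i)]{BP}: inside an injective hull $I$ of $\oplus_{\sigma\in W(\rhobar)}\sigma$, $D_0(\rhobar)$ is the largest $\GL_2(\F_q)$-subrepresentation in which each $\sigma\in W(\rhobar)$ appears with multiplicity one. Fixing an $\F[\GL_2(\F_q)]$-injective hull $(M^{\mathrm{min}}(K_v(1)))^*\hookrightarrow I$, the multiplicity-one statement from Theorem \ref{thm:K1multone} then forces the inclusion $(M^{\mathrm{min}}(K_v(1)))^*\hookrightarrow D_0(\rhobar)$; combined with the reverse inclusion from \cite[Proposition 9.3]{breuil} and finite length, this gives the isomorphism, and multiplicity-freeness of $D_0(\rhobar)$ (which concerns \emph{all} its JH factors, not just those in $W(\rhobar)$) is supplied separately by \cite[Theorem 1.1(ii)]{BP}. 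Your translation of Theorem \ref{thm:K1multone} into the multiplicity statement $[(M^{\mathrm{min}}(K_v(1)))^*:\sigma]=1$ for $\sigma\in W(\rhobar)$ is correct and is the input the BP characterization needs, so the fix is to replace your length-count by the appeal to that characterization. Your worry about $\varpi$-torsion is a non-issue: $(M^{\mathrm{min}}(K_v(1)))^*$ is an $\F$-vector space by construction (the paper takes an $\F[\GL_2(\F_q)]$-injective hull of it), so passing from $R_\sigma$ to $R_\sigma/\varpi$ is immediate.
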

\begin{proof}
There is an injection $D_0(\rhobar) \into (M^{\mathrm{min}}(K_v(1)))^*$ by \cite[Proposition 9.3]{breuil}.
Fix an $\F[\GL_2(\F_q)]$-injective hull $(M^{\mathrm{min}}(K_v(1)))^* \into I$.
Since 
\[\Hom_{\GL_2(\F_q)}(R_\sigma,(M^{\mathrm{min}}(K_v(1)))^*)\]
is one-dimensional for all $\sigma \in W(\rhobar)$ by Theorem \ref{thm:K1multone}, this injective hull factors through $D_0(\rhobar)$ by \cite[Theorem 1.1(i)]{BP}.
Since $D_0(\rhobar)$ and $(M^{\mathrm{min}}(K_v(1)))^*$ are finite length $\F[\GL_2(\F_q)]$-modules, they must be isomorphic.
Finally, note that $D_0(\rhobar)$ is multiplicity free by \cite[Theorem 1.1(ii)]{BP}.
\end{proof}

\bibliographystyle{amsalpha}
\bibliography{multonewild2}

\end{document}